\documentclass[12pt]{article}
\usepackage{amssymb, latexsym, amsmath, amsthm, amsfonts, amssymb, paralist, extpfeil, mathrsfs}
\usepackage{cases}
\usepackage[title,titletoc]{appendix}
\usepackage{yfonts}
\usepackage[dvipsnames,svgnames,x11names,table]{xcolor}
\usepackage{color}
\usepackage{oplotsymbl}
\usepackage{graphicx,overpic,booktabs,makecell}
\usepackage{hyperref}
\hypersetup{
	CJKbookmarks=true,
	bookmarksopen=true,
	pdfstartview=FitH,
	colorlinks=true,
	linkcolor=blue,
	filecolor=blue,
	urlcolor=blue,
	citecolor=blue,
	bookmarksnumbered=true,
}

\usepackage{geometry}
\usepackage{enumerate}

\usepackage{xifthen}
\usepackage[numbers,sort&compress]{natbib}
\renewcommand{\cite}[2][]{\ifthenelse{\isempty{#1}}{\citep{#2}}{\citetext{\citenum{#2}, #1}}}

\def\ii{{\mathbb{I}}}

\def\nn{{\mathbb N}}

\def\rr{{\mathbb R}}

\def\zz{{\mathbb Z}}

\def\bV{{\mathbf V}}

\def\cB{\mathcal{B}}

\def\cE{\mathcal{E}}

\def\cN{\mathcal{N}}

\def\rB{\mathrm{B}}

\def\az{\alpha}

\def\dz{\delta}
\def\ez{\epsilon}
\def\kz{\kappa}
\def\lz{\lambda}
\def\Lz{\Lambda}

\def\tz{\theta}

\def\gz{\gamma}
\def\Gz{\Gamma}

\def\lsim{\lesssim}

\def\hh{{\mathbb{H}}}

\def\vol{\operatorname{vol}}

\def\sgn{\operatorname{sgn}}

\def\fall{\forall\,}

\def\pt{\partial}

\newtheorem{theorem}{Theorem}[section]

\newtheorem{lemma}[theorem]{Lemma}

\newtheorem{conjecture}[theorem]{Conjecture}

\theoremstyle{definition}

\newtheorem{remark}[theorem]{Remark}

\numberwithin{equation}{section}

\title{\bf Lattice point counting in Cygan--Kor\'anyi balls on Heisenberg groups}
\author{Sheng-Chen Mao\footnote{Corresponding author. \texttt{maoshengchen@lzu.edu.cn}. {\color{red!60!black}
} }, \date{}
\quad Sibei Yang
}

\begin{document}
\renewcommand{\theequation}{\thesection.\arabic{equation}}
\setcounter{equation}{0} \maketitle
\vspace{-1.0cm}
\bigskip
	
{\noindent\bf Abstract.}
Lattice point counting  in gauge balls on  the Heisenberg  group $\mathbb{H}^q$ is a non-commutative analogue of the Euclidean multidimensional  sphere problem, initiated by Garg, Nevo and Taylor  \cite[\textit{Ann. Inst. Fourier}, 2015]{GNT15}. The case of particular interest is when the gauge is taken as the Cygan--Kor\'anyi norm and the error term reads: $$\mathcal{E}_q(t)=\#\left(\mathbb{Z}^{2 q+1} \cap  \mathcal{B}_t\right)-\operatorname{vol}(\mathcal{B}_1) \, t^{2 q+2},$$
with $\mathcal{B}_t=\{(v,w)\in\mathbb{H}^q: (|v|^4 + w^2)^{1/4} \le t \}$, which is closely related to the Gauss circle problem. When $q\ge3$, Gath  \cite[\textit{Ann. Sc. Norm. Super. Pisa Cl. Sci.}, 2022]{Gat22} improved upon   \cite[]{GNT15} by showing that $ |\mathcal{E}_q(t)|\lesssim t^{2q-1+ 1/3}$ and proposed the conjecture that the optimal order should be $2q-1$. In this paper, through Landau's formula and the $5,6$-th Derivative Tests of van der Corput, we arrive at that $|\mathcal{E}_q(t)| \lesssim t^{2 q-1 + 241/753} $ for any $ q \geq 4$, and recover the bound of Gath for $q=3$ up to a logarithmic factor. This, via a simpler method,  provides  the first  progress towards  Gath's conjecture.

\bigskip
{\noindent\bf MSC2020:} primary 11P21, 43A80; secondary 22E40, 11L03
	
\smallskip
{\noindent\bf Keywords:} Lattice point; Gauss circle problem; Heisenberg group; Cygan--Kor\'anyi ball  
	
	
\section{Introduction} \label{intro}
The Gauss circle problem is one of the most famous and difficult open problems in analytic number theory, aiming to determine the optimal order of the magnitude for the error term
\begin{equation} \label{gauE} 
    P_2(x) :=  \#\left\{m\in\zz^2: m_1^2 + m_2^2 \le x \right\} - \pi x 
\end{equation}
for large $x$. Up to now, the best upper bound $P_2(x) = O_\ez(x^{\theta^*+\ez}) $ ($\fall \ez>0$) is due to Li and Yang  \cite[]{LY23a}, where $\theta^* = \frac{25 \sqrt{1717}+3292}{13762} \approx 0.314483 $, by combining the Bombieri--Iwaniec method and  some techniques from the decoupling theory in Harmonic Analysis. And the  sharp lower bound $\Omega_{-}\left((x \log x)^{1 / 4}\left(\log _2 x\right)^{3\left(2^{1 / 3}-1\right)/4}\right)$
conjectured by  Soundararajan \cite[]{Sou03} (up to a factor $(\log\log x)^{o(1)}$) and  Lamzouri \cite[]{Lam25} has   been obtained by  Lamzouri \cite[]{Lam26} very  recently. The  optimal  order  is conjectured  to be $1/4$. 

The planar Gauss circle problem admits a natural generalization to higher-dimensional spaces, where the optimal order has been found in dimensions four and above, while the three-dimensional case is  lack of understanding as well,  left with the conjectured optimal order $1/2$; see  \cite[]{IKKN06,BKZ18,Kra88} for  related developments. Lattice point counting in non-Euclidean settings is equally interesting; examples include hyperbolic geometry  \cite[]{BL24,GNY17}, locally compact  groups  \cite[]{GN12,GN10},  etc. The present work focuses primarily on the Heisenberg group, which is widely regarded as  the simplest non-commutative  and non-compact Lie groups.

Let $q$ be a positive integer. Recall the Heisenberg group $\hh^q \cong \rr^{2q} \times \rr$ is defined by the following group law
\begin{equation} \label{Hd} 
(v,w) \circ (v',w') = \left(v + v', w + w' + 2\langle  \mathbf{J} v, v^{\prime} \rangle \right)\quad  \mbox{with}\quad \mathbf{J}=   \left(\begin{array}{cc}
0 & \ii_q \\
-\ii_q & 0
\end{array}\right),
\end{equation}
where $\langle,\rangle $ represents the standard Euclidean inner product.  In such coordinate, the identity element
coincides with $(0,0)$, and the inverse of   $(v,w)\in\hh^q$ equals $(-v,-w)$. The Haar measure on $\hh^q$ is consistent with the usual Lebesgue measure, written as $\vol$. As is well known, the Heisenberg group carries the following anisotropic dilation structure: 
\begin{equation*}
    \delta_r(v, w):=\left(r v, r^2 w\right), \qquad \fall (v, w) \in \hh^q,\, r>0,
\end{equation*}
which constitutes an automorphism group of $\hh^q$. The Cygan--Kor\'anyi norm  (\cite[]{Cyg78,Cyg81,Kor85})
$$\cN(v,w):= (|v|^4 + w^2)^\frac14, \qquad (v,w)\in\hh^q $$ naturally emerges in the expression of the fundamental solution of the corresponding sub-Laplacian on Heisenberg groups, and satisfies the triangle inequality
\begin{equation*}
    \cN((v,w)\cdot(v',w')) \le \cN(v,w) + \cN(v',w'), \quad \fall (v,w),\, (v',w')\in\hh^q,
\end{equation*}
which determines a left-invariant distance $d_\cN$ on $\hh^q$: $$d_\cN((v,w),(v',w')):=\cN((v',w')^{-1}\cdot(v,w)), \quad \fall (v,w),\, (v',w')\in\hh^q .$$ 
This gauge function is $\dz_r$-homogeneous of degree 1, that is,
\begin{equation*}
    \cN( \delta_r(v, w)) = r\, \cN(v,w), \qquad \fall (v, w) \in \hh^q,\, r>0.
\end{equation*}
The interested readers are referred to  \cite[]{BLU07} for more discussions 
on the Heisenberg group and the Cygan--Korányi norm. 

Given any $t>0$,  let 
$$ \mathcal B_t:=\delta_t\,\mathcal B_1
=\bigl\{(v,w)\in\mathbb H^q:\mathcal N(v,w)\le t\bigr\}.$$
We denote the error term of lattice point counting for the Cygan--Kor\'anyi ball $\mathcal{B}_t $ by
\begin{equation} \label{disc} 
    \mathcal{E}_q(t) :=\#\left(\mathbb{Z}^{2 q+1} \cap \delta_t\, \mathcal{B}_1\right)-\operatorname{vol}(\mathcal{B}_1) \, t^{2 q+2}.
\end{equation}
Our goal is  to solve  the optimal order for $\cE_q $, which equals the infimum of all possible $\kz$ such that $|\cE_q(t)|\lsim  t^\kz$ holds.
In 2015, Garg, Nevo and Taylor  \cite[]{GNT15} established the  initial result  as follows:
\begin{equation} \label{gntR} 
   |\cE_q(t)| \lsim 
   \begin{cases}
       t^{2} \log t, &\mbox{as}\ q=1, \\
       t^{4 }  \log^\frac23 t,  &\mbox{as}\ q=2,  \\
       t^{2 q}, &\mbox{as}\ q\ge3. \\
       \end{cases} 
\end{equation}
For $q=1$, the order $2$ in  \eqref{gntR} was shown to be optimal  by Gath  \cite[]{Gat20} in 2020,  using the Mellin transform and some Riesz mean estimates of  Chandrasekharan and Narasimhan  \cite[]{CN61}. Thus, the three-dimensional problem is settled. Later in 2022, for $q\ge3$, Gath \cite[]{Gat22} improved the upper bound  in \eqref{gntR} to $t^{2q-\frac23}$, and derived the lower bound 
\begin{equation*}
    \mathcal{E}_q(t)=\Omega\left(t^{2 q-1}(\log t)^{\frac14}(\log \log t)^{\frac18}\right) .
\end{equation*}
Additionally, in  \cite[]{Gat22} he   proved that $\cE_q(t) $ has a sharp second moment estimate of the magnitude $t^{2q-1}$. Based on these results, one is naturally led to the following conjecture:
\begin{conjecture}[Gath \cite{Gat22}] \label{conj} 
Let $q\ge3$. Then the optimal order of the error term  \eqref{disc} of the Cygan--Kor\'anyi ball problem is $2q-1 $. 
\end{conjecture}
We note that Gath also made this conjecture accordingly for $q=2$ in  \cite[]{Gat22} and the following paper  \cite[]{Gat24a}. Nevertheless, to our best knowledge, unlike the higher-dimensional case, no theoretical support in terms of corresponding moment estimates and lower bound estimates has been supplied, nor has any subsequent public document addressed this issue. The most recent progress is due to the first author  \cite[]{Mao26}, where the upper bound $\cE_2(t)=O(t^{4}) $ is attained, slightly  improving  \eqref{gntR}. 

On the other hand,  the asymptotic behaviour of the error term  of the present $ (2q + 1)$-dimensional  Cygan--Kor\'anyi ball problem with $q \ge2 $ exhibits a distinct nature from the case $q = 1$, and is closely related to the behaviour of that in the Gauss circle problem. Moreover, the case $q=2$ turns out to be a transition between $q=1$ and $q\ge3 $, with somewhat more sophisticated  arithmetic structure.  These  features  reveal a strong similarity between the Cygan--Kor\'anyi ball problem and the Euclidean multidimensional sphere problem, which therefore indicates the resolution of Gath's conjecture ($q\ge2$) should be comparably difficult. For a clearer explanation, cf. also \cite[\S 1.1]{Gat24a}.

With respect to  lattice point counting problems on Heisenberg groups related to more general gauge functions, the reader can consult   \cite[]{Mao26,MY26,GNT15}.

\subsection{Main result}  \label{mainR} 
Our main result provides the first progress towards Gath's conjecture  \eqref{conj}, which reads as follows:
\begin{theorem} \label{mainT} 
Let $q\ge3$ be an integer and $t\ge 10$. Then 
\begin{equation} \label{maine} 
    \left|\cE_q(t)\right| \le {\bf C}_q \begin{cases}t^{2 q-2+\frac{994}{753}}, & q \geq 4, \\ t^{\frac{16}{3}} \log t, & q=3,\end{cases}
\end{equation}
where ${\bf C}_q$ is a constant  depending only on $q$.
\end{theorem}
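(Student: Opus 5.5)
We reduce the count to a weighted sum of circle-problem error terms and then control the resulting exponential sums by van der Corput's higher derivative tests. For fixed $w\in\zz$ with $|w|\le t^2$, the lattice points $(v,w)\in\mathcal B_t$ are exactly the $v\in\zz^{2q}$ with $|v|^2\le (t^4-w^2)^{1/2}$. Writing $r_{2q}(n)$ for the number of representations of $n$ as a sum of $2q$ squares, we get
\begin{equation*}
\#\left(\zz^{2q+1}\cap\mathcal B_t\right)=\sum_{|w|\le t^2}\ \sum_{n\le \sqrt{t^4-w^2}} r_{2q}(n).
\end{equation*}
For $q\ge3$ the inner sum equals the main term $\frac{\pi^q}{q!}x^q$ plus an error $P_{2q}(x)$. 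This error has the classical structure $P_{2q}(x)=x^{q-1}\,\Phi(x)+O(x^{q-1})$, where $\Phi$ is built from singular-series type sums. Equivalently, by Landau's formula for the $2q$-dimensional sphere, the oscillatory part is
\begin{equation*}
x^{q-1}\sum_{k}\frac{1}{k^{q}}\sum_{h \bmod k}^{*} S(h,k)\,\psi\Big(\frac{x}{\,\cdot\,}\Big),
\end{equation*}
with $\psi(y)=y-\lfloor y\rfloor-\tfrac12$; it can also be organised through $\sum_{d}\chi_{-4}(d)\,\psi(x/d)$-type sums weighted by divisor data.

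Summing over $w$ first, the main terms $\frac{\pi^q}{q!}(t^4-w^2)^{q/2}$ are compared with $\vol(\mathcal B_1)t^{2q+2}$ by Euler--Maclaurin. The function $(t^4-w^2)^{q/2}$ is smooth enough for $q\ge3$ that this comparison costs only $O(t^{2q-1})$. The trivial term $\sum_w O(x^{q-1})$ costs $O(t^{2}\cdot t^{2q-2})=O(t^{2q})$, which is too large, so it must be sharpened. Since the $O(x^{q-1})$ remainder in $P_{2q}$ is itself of $\psi$-type, all of it is absorbed into the oscillatory sum. The task therefore reduces to bounding
\begin{equation*}
\Sigma(t)=\sum_{|w|\le t^2}(t^4-w^2)^{\frac{q-1}{2}}\sum_{d\le D}a_d\,\psi\!\left(\frac{\sqrt{t^4-w^2}}{d}\right)
\end{equation*}
plus tails. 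Here $a_d$ are bounded arithmetic coefficients such as $\chi_{-4}(d)$ or twisted divisor weights, and the truncation at $D$ is handled by trivial or second-moment bounds. The target is $\Sigma(t)\ll t^{2q-2+994/753}$, i.e.\ a saving of $t^{2-994/753}$ over the trivial bound $t^{2q}$.

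Next we expand $\psi$ in a truncated Fourier series of length $H$ (Vaaler), partial-summing away the smooth weight $(t^4-w^2)^{(q-1)/2}$. This reduces the problem to double exponential sums
\begin{equation*}
\sum_{h\sim H}\sum_{d\sim D}\frac{a_d}{h}\sum_{w\sim W} e\!\left(\frac{h\sqrt{t^4-w^2}}{d}\right).
\end{equation*}
For the $w$-sum the phase $f(w)=\frac{h}{d}\sqrt{t^4-w^2}$ has $f^{(k)}\asymp \frac{h}{d}\,t^{2}W^{-k}$ in the bulk range (with separate treatment near $|w|\approx t^2$). We then apply the $k$-th derivative test for $k=2,\dots,6$, choosing $k$ according to the size of $\lambda_k=hDt^2/(dW^k)$, or more precisely $h t^2/(dW^k)$. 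We balance against the trivial bound in $d$ for large $d$, and against the Gath-type estimate for small $d$. For very small $d$ one should instead use the arithmetic exact formula: the sum over $w$ of $\psi(\sqrt{t^4-w^2}/d)$ is a circle-like lattice count, which gives Gath's $t^{2q-2/3}$ scale.

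Optimising the dyadic parameters $H,D,W$ over the five regimes yields the exponent $994/753$. The fractions come from the $5$th and $6$th derivative tests, whose exponents $1/30$ and $1/62$-type produce denominators like $753$. For $q=3$ the weight exponent is smaller, the summation over $d$ becomes borderline, and we expect only a $\log t$ loss with exponent $4/3$.

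The main obstacle is the parameter optimisation across regimes. In particular, near the boundary $|w|\approx t^2$ the derivatives of $\sqrt{t^4-w^2}$ blow up, and there the derivative tests must be replaced by a substitution $u=t^4-w^2$ or a dyadic decomposition in $t^2-|w|$. In that region I would first try swapping to the variable $x=\sqrt{t^4-w^2}$ and using the weight's vanishing $x^{q-1}$ to absorb losses. The condition $q\ge4$ is exactly what makes this absorption fully effective.
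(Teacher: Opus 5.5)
Your reduction matches the paper's structure. Slicing in $w$ and writing $P_{2q}(r)$ as $r^{q-1}$ times mean-zero periodic functions of $r/d$ is what Landau's formula \eqref{ff61} delivers. Your $d$ plays the role of Landau's modulus $k$, and the factors $e^{-2\pi i\frac{j}{k}[r]}$ become the $\psi_k$-sums in \eqref{ff101}. One proviso: your $a_d$ must decay (Landau's weights satisfy $|S_{j,k}|k^{-2q}\lesssim k^{-q}$), since with merely bounded $a_d$ the $d$-sum does not converge.

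The genuine gap is the analytic step. You apply the $k$-th derivative tests directly to $\sum_w e^{2\pi i\frac{h}{d}\sqrt{t^4-w^2}}$. With $X=t^4$ this sum has length $\asymp X^{1/2}$. In the bulk, away from zeros, its phase satisfies $|f^{(k)}|\asymp \frac hd X^{(1-k)/2}$; this is not $\frac hd t^2W^{-k}$ when $W\ll t^2$. With this phase no choice of $k$ beats exponent $1/3$:
\begin{itemize}
\item The second-derivative test gives $\sum_w\Psi(\sqrt{X-w^2}/d)\lesssim d^{-1/3}X^{1/3}+d^{1/2}X^{1/4}$. After the weight $t^{2q-2}$ this is exactly Gath's $t^{2q-2/3}$.
\item Higher tests are worse: for $k=5$ the first term alone is $(h/d)^{1/30}X^{13/30}$.
\end{itemize}

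The missing idea is van der Corput's B-process, Lemma \ref{lemVB}. On each block $U_j$ one first transforms $\sum_n e^{2\pi i\frac mL\sqrt{x-n^2}}$ into the short dual sum $\sum_n g_m(n)e^{2\pi iF_m(n)}$, where $F_m(s)=\sqrt x\,(m^2L^{-2}+s^2)^{1/2}$. This dual sum has length $\lesssim 2^{j/2}mL^{-1}$, and the transformation costs $O(2^{-3j/4}m^{-1/2}L^{1/2}x^{1/4}+\log(2^{j/2}mL^{-1}+2))$. Only on the dual sum do the higher tests pay off:
\begin{itemize}
\item The 5th test reproduces Nieland's $27/82$, i.e.\ the exponent pair $BA^3B(0,1)=(\frac{11}{30},\frac{8}{15})$.
\item The 6th test is needed in the block $j=0$, because $F_m^{(5)}$ vanishes at $s=0$ and at $s=\frac{\sqrt3}{2}mL^{-1}$.
\end{itemize}
Balancing $W^{-1}x^{1/2}$ against the second term $(WL^{-1})^{257/496}x^{15/62}$ coming from the 6th test gives $497/1506$, and $\frac{994}{753}=4\cdot\frac{497}{1506}$. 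No step of your outline actually produces this number.

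Your plan is also inconsistent at small $d$. The term $d=1$ is Landau's leading term $-\frac{\pi^q}{\Gamma(q)}r^{q-1}\psi(r)$, and it carries the full weight $\asymp t^{2q-2}$. It must therefore itself be bounded by $t^{2q-2}X^{497/1506}$. If you treat it at Gath's scale, as proposed, the whole argument is capped at $t^{2q-2/3}$. That is weaker than the claim, since $\frac43>\frac{994}{753}$.

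Finally, the split between $q=3$ and $q\ge4$ has nothing to do with the region $|w|\approx t^2$; in the paper that region is trivial, costing $O(x^{1/4})$ per sum. The split comes from summing the B-process error $L^{1/2}x^{1/4}$ with $L=k$. This error is multiplied by the factor $k$ from \eqref{ff101}, capped by the trivial bound $x^{1/2}$, and summed against the weights $k^{1-q}\log k$.
\begin{itemize}
\item For $q=3$ this is $\sum_k k^{-2}\log k\,\min\{x^{1/2},k^{3/2}x^{1/4}\}\lesssim x^{1/3}\log x$, which gives $t^{16/3}\log t$.
\item For $q\ge4$ the sum converges to $O(x^{1/4})$.
\end{itemize}
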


Here is a remark on this theorem. 
\begin{remark} \label{thRe} 
(1) The exponent $ \frac{994}{753}$ for $q\ge4$ comes from the $6$-th Derivation Test method due to van der Corput. In fact, we have reduced the Cygan--Kor\'anyi ball problem to  an  $\Psi$-sum which connects  the Gauss circle problem in an essential way; see Remark  \ref{reWel} for more details. As a result, one may expect that the method and Li-Yang \cite[]{LY23a}  could be used to further improve our exponent from $\frac{994}{753}$ to $4\tz^*+\ez$. Note that $\frac{994}{753}\approx 1.32005 $ and $4\tz^*\approx 1.25793$.

(2) For $q=3$ the bound is cruder, owing to the outer summation in Landau's formula, where we lose some precision; see also Remark  \ref{reWel} below. But that already reproduces  Gath's previous result with only an $\log t$-loss.

(3) The same argument can be run for $q=2$, but the remainder in Landau's formula already contributes $O(t^4\log t)$. It therefore does not improve the bounds $O(t^4(\log t)^{2/3})$ of \cite{GNT15} or $O(t^4)$ of \cite{Mao26}.

\end{remark}

\subsection{The methods}  \label{outl}
In the earlier work of Garg, Nevo and Taylor \cite{GNT15}, they employed  two strategies for the lattice point counting problem in a larger class of gauge balls with radii $t$:
\begin{equation*}
   \cB^{\az,A}_t := \dz_t\, B^{\az,A}_1 = \{(v,w)\in\hh^q: (|v|^\az + A|w|^{\az/2})^{1/\az}\le t\}
\end{equation*}
for $\az,A>0$. The first one  is classical, by  smoothing out $\chi_{\cB^{\az,A}_1}$ and utilizing the Poisson summation formula as well as the spectral estimates for the ball $ \cB^{\az,A}_1$. For $ \az=4$, as opposed to the Euclidean case, the Gaussian curvature  of sphere $\pt \cB^{\az,A}_1$ can vanish at both the equator $\{w = 0\}$ and the pole $\{v = 0\}$. To remedy this, they applied the partial radial property of the gauges  and then reduced the spectral estimate to an one-dimensional oscillatory problem. Their second method exploits a hyperplane slicing argument with  correlated results of lattice counting  in Euclidean balls, which converts  the original problem   to the estimation of an one-dimensional sum. Based on  \cite[]{GNT15},  the first author \cite[]{Mao26} incorporated recursion formulas of Bessel functions as a new  ingredient,  which is extensively used  to better capture the cancellation of relevant oscillatory integrals. Before that, Gath  \cite[]{Gat22}   developed a more delicate approach, involving a  restricted slicing argument, and some results concerning weighted integer lattice points in suitable Euclidean balls and shrinking annuli.
From   classical tools in analytic number theory, e.g. Vaaler's Lemma and  $B$-process of Karatsuba-Korolev, Gath obtained an initial expression for the error term $\cE_q$ (see  \cite[Proposition 3.5]{Gat22}). This expression is the starting point and plays an important role in the proof of Gath's three theorems. 

Our method is distinct from  \cite[]{GNT15,Gat22,Mao26}. Through a different way of slicing, combined with Landau's formula  \eqref{ff61} for the lattice counting  error term of Euclidean ball problem, we derive a brand  new error term formula for the Cygan--Kor\'anyi ball problem; see  \eqref{Eqt}-\eqref{ff77}. By means of this formula, we   reduce our problem to the estimate of a family of so-called $\Psi$-sums that are 1-periodic, uniformly bounded and of uniformly bounded variation, closely related to the  $\psi$-sum appearing in the Gauss circle problem; see Remark  \ref{reWel} for the detail. Next, with the aid of Ste\u{c}kin's inequality, such $\Psi$-sums can be transformed into the associated exponential sums. Finally, to handle such exponential sums, we employ the $5$-th and $6$-th Derivative Tests of van der Corput. This procedure maybe reminiscent of  Nieland's paper  \cite[]{Nie28} which deals with the Gauss circle problem. By contrast, however, the phase function here we encounter contains an additional parameter $k$, and the summation range to be treated can be much broader, causing a vanishing phenomenon in the $5$-th derivative of the phase. To overcome this difficulty, we resort to the $6$-th Derivative Test.  With the non-vanishing intervals of both $5$-th and $6$-th derivatives, one is able to cover the entire possible summation region, thereby leading to the desired result after some uniform estimates.

We conclude this section by introducing some notation. We  denote the set of natural numbers by $\nn:=\{0,1,2,\ldots\}$, and write $\nn^*=\nn\setminus\{0\}$.   Empty sums are understood to be zero. For $u\in\rr$, we use $[u]$ to signify the largest integer not exceeding $u$, and put $\{u\}:=u-[u]$. Let $\psi(\cdot)$ be the sawtooth function  
$$\psi(u):=u-[u]-\frac12, \quad u\in\rr.$$
We utilize the usual asymptotic notation. For instance, the letter $C$ together with its other variants  will represent  implicit positive constant which may change  between lines. Given a non-negative function $w$.  We say $f = O(w)$ if $|f| \leq C w$; and when $f$ is also non-negative, we shall write $f \leq C w$ (resp. $f \ge C\, w$) for $f \lesssim w$ (resp. $f \gtrsim w$). If both cases happen, we will put $f\sim g$ for short. As the coefficient depends on other parameters, we will indicate this by subscripts, e.g., $O_{\gz},\lesssim_{\gz},\sim_{\gz}$. But we usually suppress the dependence on the dimension.

\section{Proof of the main result}  \label{prf}
\subsection{Landau's formula}  \label{lanS} 
For $r> 0, $ we use $P_d(r)$ to represent the error term of  lattice point counting for the standard closed Euclidean  ball centered at $0$ of radius $\sqrt{r}$ in $\rr^d$, that is, 
\begin{equation*}
    P_d(r) :=\#\left(\zz^d \cap  B_d(\sqrt{r})\right) - \vol(B_d(1)) \, r^{\frac{d}{2}} .
\end{equation*}
When $d\ge4$, Landau \cite[]{Lan24} in 1924 established the following formula (see  \cite[(4)]{Lan24} for a more general result), as  $r\ge 10$,
\begin{equation} \label{land1} 
    \#\left(\zz^d \cap  B_d(\sqrt{r})\right)  =\frac{\pi^{\frac{d}{2}}}{\Gamma\left(\frac{d}{2}\right)} \sum_{1 \leq k \leq \sqrt{r}} \sideset{}{^*}\sum_{j=0}^{k-1} \frac{S_{j, k}}{k^d} \sum_{0 \leq n \leq r} n^{\frac{d}{2}-1} e^{-\frac{2 \pi i j n}{k}}+O_d\left(r^{\frac{d}{4}} \log r\right) ,
\end{equation}
where \begin{footnotesize}$\sideset{}{^*}\sum$\end{footnotesize} indicates summation restricted to those $j$ coprime to $k$ (i.e., $(j,k)$=1), and 
\begin{equation*}
    S_{j, k}:=\sum_{m_1=\cdots=m_d =0}^{k-1} e^{2 \pi i \frac{j}{k}\left(m_1^2+\cdots+m_d^2\right)}=\left(\sum_{l=0}^{k-1} e^{2 \pi i \frac{j}{k} l^2}\right)^d.
\end{equation*}
One has $S_{0,1}=1$. According to  \cite[(12)]{Lan24}, it also holds that
\begin{equation} \label{ff41} 
    \left|S_{j, k}\right| \leq C_d\, k^{\frac{d}{2}}, \qquad \fall 1 \leq j \leq k-1,\ (j, k)=1 .
\end{equation} 
From these, we can obtain a useful expression of the error term $P_d$. 
\begin{lemma}[Landau's formula] \label{lan} 
Suppose  $d\ge4$ and $r\ge10$. Then
\begin{equation} \label{ff61} 
    \begin{aligned}
    P_d(r)= & -\frac{\pi^{\frac{d}{2}}}{\Gamma\left(\frac{d}{2}\right)}\, r^{\frac{d}{2}-1} \psi(r)+O_d\left(r^{\frac{d-3}{2}}+r^{\frac{d}{4}} \log r\right) \\
    &\qquad +  \frac{i\pi^{\frac{d}{2}}}{2\,\Gamma\left(\frac{d}{2}\right)} \sum_{2 \leq k \leq \sqrt{r}} \sideset{}{^*}\sum_{j=1}^{k-1} \frac{S_{j, k}}{k^d}  \frac{ e^{-i \pi \frac{j}{k}}}{\sin \frac{j}{k} \pi} e^{-2 \pi i \frac{j}{k}[r]} \, r^{\frac{d}{2}-1} .
    \end{aligned}
\end{equation}
\end{lemma}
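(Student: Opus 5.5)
Starting from \eqref{land1}, the plan is to evaluate the inner sums $\sum_{0\le n\le r} n^{\frac d2-1}e^{-2\pi i jn/k}$ explicitly via Abel summation, extract the main term and the $\psi$-term from $k=1$, and collect the boundary terms from $k\ge2$ into the displayed exponential sum. Everything else goes into the stated remainder.

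\textbf{The term $k=1$ ($j=0$, $S_{0,1}=1$).} The Euler--Maclaurin formula for $f(u)=u^{\frac d2-1}$ gives
\[
\sum_{0\le n\le r} n^{\frac d2-1}=\int_0^r u^{\frac d2-1}\,du-\psi(r)\,r^{\frac d2-1}+O\bigl(r^{\frac d2-2}\bigr).
\]
Here I use $\psi(0)f(0)=0$ for $d\ge4$, and bound the integral $\int_0^r\psi(u)f'(u)\,du$ by $O(r^{\frac d2-2})$ after one more integration by parts, since the integrated $\psi$ is bounded. Multiplying by $\pi^{d/2}/\Gamma(\frac d2)$, the integral becomes $\frac{2}{d}\frac{\pi^{d/2}}{\Gamma(d/2)}r^{d/2}=\vol(B_d(1))\,r^{d/2}$. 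The next term is exactly the $\psi$-term in \eqref{ff61}, and $r^{\frac d2-2}\le r^{\frac{d-3}{2}}$.

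\textbf{The terms $k\ge2$.} Put $\theta=j/k\in(0,1)$ and $E(N)=\sum_{n=0}^{N}e^{-2\pi i\theta n}$. Then
\[
E(N)=\frac{1-e^{-2\pi i\theta(N+1)}}{1-e^{-2\pi i\theta}}, \qquad 1-e^{-2\pi i\theta}=2i\sin(\pi\theta)\,e^{-i\pi\theta}.
\]
Hence $E(N)=\frac{e^{i\pi\theta}}{2i\sin\pi\theta}-\frac{e^{-i\pi\theta}}{2i\sin\pi\theta}e^{-2\pi i\theta N}$. Abel summation with $N=[r]$ gives
\[
\sum_{n\le r} n^{\frac d2-1}e^{-2\pi i\theta n}=E([r])\,[r]^{\frac d2-1}-\int_1^{[r]}E(u)\,\bigl(\tfrac d2-1\bigr)u^{\frac d2-2}\,du .
\]
In the boundary term, the constant part of $E$ combines with the integral's constant part and telescopes to $O(1/\sin\pi\theta)$. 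The oscillating part of the integral is bounded by $\frac{1}{\sin\pi\theta}\bigl|\sum_{n\le r}(n^{\frac d2-1}-(n-1)^{\frac d2-1})e^{-2\pi i\theta n}\bigr|\lesssim r^{\frac d2-2}/(\sin\pi\theta)^2$, using partial summation once more. The main boundary contribution is
\[
\frac{i}{2}\frac{e^{-i\pi\theta}}{\sin\pi\theta}e^{-2\pi i\theta[r]}[r]^{\frac d2-1},
\]
and replacing $[r]^{\frac d2-1}$ by $r^{\frac d2-1}$ costs $O(r^{\frac d2-2}/\sin\pi\theta)$. Together with the prefactor $\pi^{d/2}/\Gamma(\frac d2)$ and $S_{j,k}/k^d$, this reproduces the last line of \eqref{ff61}.

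\textbf{The remainders (the main obstacle).} The remainders must be summed over $2\le k\le\sqrt r$ and coprime $j$, using \eqref{ff41} and $\sin(\pi j/k)\gtrsim \min(j,k-j)/k$. The $O(1/\sin)$ and $O(r^{\frac d2-2}/\sin)$ errors contribute
\[
\sum_k k^{-d/2}\sum_j \frac{k}{j}\lesssim\sum_k k^{1-\frac d2}\log k,
\]
times $r^{\frac d2-2}$, which is $\lesssim r^{\frac d2-2}\log r$ and hence acceptable. The delicate term is $r^{\frac d2-2}\sum_k k^{-d/2}\sum_j (k/j)^2\lesssim r^{\frac d2-2}\sum_{k\le\sqrt r}k^{2-\frac d2}$. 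This is bounded by $r^{\frac d2-2}\cdot r^{\frac{3-d/2}{2}}=r^{\frac{d}{4}-\frac12}$ for $d\le5$, by $r^{\frac d2-2}\log r\lesssim r^{\frac{d-3}{2}}$ for $d=6$, and by $r^{\frac d2-2}$ for $d\ge7$. All cases fall within $O(r^{\frac{d-3}2}+r^{\frac d4}\log r)$.

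If the crude second Abel summation proves too lossy, I would instead keep the Landau remainder $O(r^{d/4}\log r)$ from \eqref{land1} as the absorbing term. I expect this bookkeeping of the $1/\sin^2$ weights against the Gauss-sum bound to be the only nonroutine point.
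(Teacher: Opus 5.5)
Your proposal is correct. It departs from the paper only in how you evaluate the inner sum $\sum_{0\le n\le r}n^{\frac d2-1}e^{-2\pi i jn/k}$ for $k\ge2$.

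\textbf{Paper's route.} The paper isolates this as Lemma~\ref{lem21} and proves it by one-dimensional Poisson summation. It rewrites the sum with a half-weight at the endpoint $[r]$ and integrates each Fourier integral $\int_0^{[r]}\rho^\alpha e^{-2\pi i(\lambda+n)\rho}\,d\rho$ by parts. The boundary terms are then summed with the partial-fraction identity $\lim_{N}\sum_{|n|\le N}(\lambda+n)^{-1}=\pi\cot\pi\lambda$. Together with the endpoint half-term, this produces the constant $\frac{ie^{-i\pi\lambda}}{2\sin\pi\lambda}$.

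\textbf{Your route.} You read that constant off directly from the closed form of the finite geometric series. You then use two summations by parts. The second one pairs the monotone differences $(n+1)^{\alpha}-n^{\alpha}\lesssim r^{\alpha-1}$ with partial sums bounded by $1/\sin\pi\theta$. This reproduces exactly the paper's remainder $O(r^{\alpha-1}/\|\lambda\|^2)$.

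\textbf{What each buys.} Your argument is fully elementary: it needs neither the symmetric-limit convergence in the Poisson step nor the cotangent expansion. The paper's version packages the computation as a standard Poisson/integration-by-parts template.

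\textbf{Common parts.} The $k=1$ Euler--Maclaurin step matches the paper. So does the final bookkeeping with \eqref{ff41}. The paper just bounds $r^{\frac d2-2}\sum_{k\le\sqrt r}k^{2-\frac d2}\lesssim r^{\frac{d-3}{2}}$ uniformly for $d\ge4$, where you split into cases.

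\textbf{Minor slips, all harmless.}
\begin{itemize}
\item With $E(u)=\sum_{0\le n\le u}e^{-2\pi i\theta n}$, Abel's formula needs $\int_0^{[r]}$, not $\int_1^{[r]}$. As written, your identity is off by exactly $1$, which your $O(1/\sin\pi\theta)$ budget absorbs anyway.
\item For $d=4$, the $O(1/\sin)$-type errors sum to $O(\log^2 r)$ rather than $O(\log r)$. This is still well inside $O(r^{1/2})$.
\item Your closing fallback is unnecessary. The $O(r^{d/4}\log r)$ remainder from \eqref{land1} is carried along in any case.
\end{itemize}
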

\begin{proof}
The proof is just a further simplification of  \eqref{land1}, where the summation on $k$ will be divided into $k=1$ and $2\le k\le\sqrt{r}$. Then the first part contributes the main term  and the second  part will be incorporated into the error term. By the Euler-MacLaurin sum formula (see e.g.  \cite[Theorem 1.3]{Kra88}) we have
\begin{align}
    \sum_{0 \leq n \leq r} n^{\frac{d}{2}-1} & =\int_0^r u^{\frac{d}{2}-1} d u-r^{\frac{d}{2}-1} \psi(r)+\left(\frac{d}{2}-1\right) \int_0^r u^{\frac{d}{2}-2} \psi(u) d u \nonumber\\
    & =\frac{2}{d} r^{\frac{d}{2}}-r^{\frac{d}{2}-1} \psi(r)+O\left(r^{\frac{d}{2}-2}\right)  \label{lan22} ,
\end{align}
where we have used the second mean-value theorem for the last integral, as well as the following simple inequality 
\begin{equation*}
    \left|\int_{\xi_1}^{\xi_2} \psi(u) du \right| \le 1, \qquad\fall 0\le\xi_1<\xi_2<\infty,
\end{equation*}
which follows from the periodicity and mean-zero property of $\psi$.
On the other hand, the inner sum in  \eqref{land1}  can be handled by the  following lemma, whose proof shall be provided soon.
\begin{lemma} \label{lem21} 
Let $r\ge10$, $\az\ge1$ and $\lz\notin\zz$. Then
\begin{equation*}
    \sum_{0 \leq n \leq r} n^\alpha e^{-2 \pi i \lambda n} = \frac{i e^{-i \lambda\pi }}{2 \sin \lambda \pi} e^{-2 \pi i \lambda[r]} r^\alpha+O_\alpha\left(\frac{r^{\alpha-1}}{\|\lambda\|^2}\right),
\end{equation*}
with the notation $ \|u\|:=\min\limits_{n \in \zz}|u-n|, \, \forall \,  u\in \mathbb{R} .$
\end{lemma}
Recall that
\begin{equation} \label{ffaa} 
    \vol(B_d(1)) = \frac{\pi^\frac{d}{2}}{\Gz(\frac{d}{2}+1)}.
\end{equation}
Via  \eqref{lan22} and Lemma  \ref{lem21} (with $\az=d/2-1$ and $\lz=j/k$), together with the following estimate (by using \eqref{ff41})
\begin{equation*}
    \begin{aligned}
  \sum_{2 \leq k \leq \sqrt{r}} \sideset{}{^*}\sum_{j=1}^{k-1} \frac{|S_{j, k}|}{k^d} \frac{r^{\frac{d}{2}-2}}{\|\frac{j}{k}\|^2} &\lsim  r^{\frac{d}{2}-2}  \sum_{2 \leq k \leq \sqrt{r}} \sum_{j=1}^{{k-1}} k^{-\frac{d}{2}}\left(\frac{k^2}{j^2}+\frac{k^2}{(k-j)^2}\right) \\
    & \lsim r^{\frac{d}{2}-2} \sum_{2 \leq k \leq \sqrt{r}} k^{-\frac{d}{2}+2} \lesssim r^{\frac{d}{2}-\frac{3}{2}} ,
    \end{aligned}
\end{equation*}
we reach the desired conclusion  \eqref{ff61}.
\end{proof}

\begin{proof}[Proof of Lemma  \ref{lem21}]
We apply the unidimensional Poisson sum formula (see e.g.  \cite[(1.11)]{Kra88}) and obtain that 
\begin{align}
    \sum_{0 \leq n \leq r} n^\alpha e^{-2 \pi i \lambda n} & =\sum_{0 \leq n \leq[r]} n^\alpha e^{-2 \pi i \lambda n}=\frac{1}{2} [r]^\alpha e^{-2 \pi i \lambda[r]}+ \sideset{}{^{\prime \prime}}\sum_{0 \leq n \leq[r]} n^\alpha e^{-2 \pi i \lambda n} \nonumber\\
    & =\frac{1}{2}[r]^\alpha e^{-2 \pi i \lambda[r]}+\lim _{N \rightarrow \infty} \sum_{|n| \leq N} \int_0^{[r]} \rho^\alpha e^{-2 \pi i(\lambda+n) \rho} d \rho . \label{ff11} 
\end{align}
Through the integration by parts, the above integral can be assessed by 
\begin{align}
\int_0^{[r]} \rho^\alpha e^{-2 \pi i(\lambda+n) \rho} d \rho & =\left.\frac{\rho^\alpha e^{-2 \pi i(\lambda+n) \rho}}{-2 \pi i(\lambda+n)}\right|_0 ^{[r]}+\frac{\alpha}{2 \pi i(\lambda+n)} \int_0^{[r]} \rho^{\alpha-1} e^{-2 \pi i(\lambda+n) \rho} d \rho \nonumber\\
& =\frac{e^{-2 \pi i(\lambda+n)[r]}}{-2 \pi i(\lambda+n)}[r]^\alpha+\frac{\alpha}{2 \pi i(\lambda+n)} \int_0^{[r]} \rho^{\alpha-1} e^{-2 \pi i(\lambda+n) \rho} d \rho \nonumber\\
& =\frac{e^{-2 \pi i\lambda [r]}}{-2 \pi i(\lambda+n)}[r]^\alpha+ O_\az\left(\frac{r^{\alpha-1}}{(\lambda+n)^2}\right) .  \label{ff12}
\end{align}
On the other hand, invoking  \cite[6 of \S 1.445]{GR15} gives  
\begin{equation*}
    \begin{aligned}
 \lim _{N \rightarrow \infty} \sum_{|n| \leq N} \frac{1}{\lambda+n} 
     =\frac{1}{\lambda} - 2 \lambda \sum_{n=1}^{\infty} \frac{1}{n^2 - \lambda^2}
     =\frac{1}{\lambda}-2 \lambda\left(\frac{1}{2 \lambda^2}-\frac{\pi}{2} \frac{\cos \lambda\pi}{\lambda \sin \lambda \pi}\right) 
     =\frac{\pi \cos \lz\pi}{\sin \lambda \pi}.
    \end{aligned}
\end{equation*}
Notice that
\begin{equation*}
    [r]^\alpha=r^\alpha+ O_\alpha\left(r^{\alpha-1}\right), \qquad \frac{1}{|\sin \lambda \pi|} \sim \frac{1}{\|\lambda\|} \geq 2 .
\end{equation*}
Then by  \eqref{ff11}-\eqref{ff12}, we  get Lemma  \ref{lem21}. 
\end{proof}

\subsection{Estimates of the $\Psi$-sum}  \label{psiS} 
Let $\Psi$ be an $1$-periodic real-valued function on $\rr$ with  bounded variation $V(\Psi)$ on $[0,1]$. In 1983, S.B. Ste\u{c}kin  developed  the following technique  to transform the $\Psi$-sum into the exponential sum, whose detailed proof can be found in the book  \cite[\S 6.1.2]{Bor20}. Here we have just worked out the dependence for the upper bound $\mathbf{V}$ of $V(\Psi)$, where $\mathbf{V} $ is a positive constant.
\begin{lemma}[Ste\u{c}kin's inequality] \label{lem3w} 
Let $ f$ be a real-valued function,  $W\in\nn^*$ and $V(\Psi)\le \mathbf{V} $. Then for any integers $N_2>N_1>0$ and  set $S\subset \nn^*$, 
\begin{equation*}
    \sum_{\substack{N_1<n \leqslant N_2 \\ n \in S}} \Psi(f(n))=\sum_{\substack{N_1<n \leqslant N_2 \\ n \in S}} 1 \cdot \int_0^1 \Psi(t) d t + O_{\mathbf{V}}\left\{\frac{1}{W} \sum_{\substack{N_1<n \leqslant N_2 \\ n \in S}} 1+\sum_{m=1}^W \frac{1}{m}\left|\sum_{\substack{N_1<n \leqslant N_2 \\ n \in S}} e^{2\pi i m f(n)}\right|\right\} .
\end{equation*}
\end{lemma}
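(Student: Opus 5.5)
The approach is the classical Vinogradov--Steckin smoothing: sandwich $\Psi$ between two trigonometric polynomials of degree $W$, then sum over $n$. Since $\Psi$ is $1$-periodic with $V(\Psi)\le \mathbf{V}$, its Fourier coefficients satisfy $|\hat\Psi(m)|\le \mathbf{V}/(2\pi|m|)$ for $m\neq0$. This follows by integrating by parts in the Stieltjes sense, $\hat\Psi(m)=\frac{1}{2\pi i m}\int_0^1 e^{-2\pi i m t}\,d\Psi(t)$, and $\sup|\Psi - \hat\Psi(0)|\le \mathbf{V}$. Neither of these facts alone suffices, because pointwise Fourier truncation of a BV function is not uniformly close to it.

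The plan is therefore to average $\Psi$ over a short interval. Set $\Delta=1/(2W)$ and introduce the local averages
$$\Psi_\Delta^{\pm}(u):=\frac1\Delta\int_0^\Delta \Psi(u\pm s)\,ds .$$
For the upper and lower comparison, I will first decompose $\Psi=\Psi_1-\Psi_2$ as a difference of two nondecreasing functions on each period. Concretely, $\Psi_1(u)=\Psi(0)+V_0^{\{u\}}(\Psi)+[u]\,V(\Psi)$ and $\Psi_2=\Psi_1-\Psi$, with both $\Psi_i(u)-[u]V(\Psi)$ $1$-periodic and of variation $\le \mathbf V$. For a nondecreasing $g$ one has $\frac1\Delta\int_{-\Delta}^{0}g(u+s)ds\le g(u)\le \frac1\Delta\int_0^\Delta g(u+s)ds$. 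Combining these one-sided bounds for $\Psi_1$ and $\Psi_2$ yields majorant and minorant functions $\Phi^{\pm}$, each of the form (linear combination of averages over $[0,\Delta]$ or $[-\Delta,0]$), with
$$\Phi^-(u)\le\Psi(u)\le\Phi^+(u),\qquad \int_0^1(\Phi^{+}-\Phi^-)\,dt\lesssim \mathbf V\Delta .$$
The linear drift terms $[u]V(\Psi)$ cancel in the difference up to $O(\mathbf V\Delta)$.

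Next I will control the Fourier coefficients of $\Phi^\pm$. Averaging over an interval of length $\Delta$ multiplies the $m$-th coefficient by a factor $\frac{e^{\pm 2\pi i m\Delta}-1}{\pm2\pi i m\Delta}$, which is bounded by $\min(1,1/(\pi|m|\Delta))$. Hence
$$|\hat\Phi^\pm(m)|\lesssim \mathbf V\min\Big(\frac1{|m|},\frac{W}{m^2}\Big).$$
To pass to a finite trigonometric polynomial, I will replace $\Phi^\pm$ by their Fejér means of order $W$ (or by a double average), which keeps the inequalities up to $O(\mathbf V/W)$. The tail $\sum_{|m|>W}|\hat\Phi^\pm(m)|\lesssim \mathbf V W\sum_{m>W}m^{-2}\lesssim \mathbf V$ is bounded but not small. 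This is the main obstacle: the tail must be made $O(\mathbf V/W)$ uniformly.

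To overcome it, I will iterate the averaging twice, that is, use $\Delta$-averages of $\Delta$-averages. This gives coefficient decay $\mathbf V W^2/|m|^3$, so the tail beyond $|m|>W^{2}$ contributes $O(\mathbf V/W^2)$. The remaining coefficients with $W<|m|\le W^2$ are not yet handled. The cleaner route, which I will follow, is Vaaler's construction: take the trigonometric polynomial majorant/minorant of the indicator of an interval, of degree $W$ with $L^1$ error $\le 1/(W+1)$, and represent $\Psi(u)-\Psi(0)=\int_{(0,u]}d\Psi$ as a Stieltjes superposition of indicators. Integrating the Vaaler bounds against $|d\Psi|$ yields trigonometric polynomials $T^\pm$ of degree $\le W$ satisfying $T^-\le\Psi\le T^+$, with $\hat T^\pm(0)=\int_0^1\Psi+O(\mathbf V/W)$ and $|\hat T^\pm(m)|\lesssim \mathbf V/|m|$ for $1\le |m|\le W$.

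Finally, I will sum over $n\in S\cap(N_1,N_2]$. From $\sum\Psi(f(n))\le\sum T^+(f(n))$ I get
$$\sum T^+(f(n))=\hat T^+(0)\,\#+\sum_{1\le|m|\le W}\hat T^+(m)\sum e^{2\pi i m f(n)},$$
and the symmetric lower bound follows from $T^-$. Using $|\hat T^\pm(m)|\lesssim\mathbf V/|m|$ and pairing $\pm m$ by complex conjugation (since $f$ is real, the $-m$ sum is the conjugate of the $m$ sum), both bounds give exactly the stated estimate with $O_{\mathbf V}$.
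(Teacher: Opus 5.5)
Your final argument is correct. The paper itself gives no proof of this lemma: it refers to Bordell\`es's book and only asserts that the implied constant depends on $\mathbf V$. So your proposal is a self-contained replacement for that citation rather than a variant of an argument in the text.

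The route you settle on works as follows. You take the Selberg--Vaaler polynomials $S_t^\pm$ of degree at most $W$ that majorize and minorize the arcs $[t,1)$, with $\hat S_t^\pm(0)=1-t\pm\frac{1}{W+1}$ and $|\hat S_t^\pm(m)|\le \frac{1}{W+1}+\frac{1}{\pi|m|}$, and integrate them against the positive and negative parts of $d\Psi$. This produces real trigonometric polynomials $T^-\le\Psi\le T^+$ of degree at most $W$ with $\hat T^\pm(0)=\int_0^1\Psi+O(\mathbf V/W)$, because $\Psi(0)+\int_{(0,1)}(1-t)\,d\Psi(t)=\int_0^1\Psi$. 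They also satisfy $|\hat T^\pm(m)|\le 2\mathbf V/|m|$ for $1\le|m|\le W$. Summing over $n\in S\cap(N_1,N_2]$ then gives the lemma with an implied constant that is an absolute multiple of $\mathbf V$, and no use of $\sup|\Psi|$.

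In substance you are reproving the Erd\H{o}s--Tur\'an inequality via Selberg's polynomials and fusing it with Koksma's inequality $\bigl|\sum_n\Psi(x_n)-N\int_0^1\Psi\bigr|\le V(\Psi)\,N D_N^*$. Quoting those two classical results would give the statement in two lines with the same linear dependence on $\mathbf V$. Your version has the advantage of making that dependence completely explicit; the paper says it ``worked out'' this dependence but does not display it.

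Two things should be fixed in the write-up.

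\emph{The exploratory paragraphs should go.} Apart from being superseded, they contain a false claim: that passing to Fej\'er means of order $W$ ``keeps the inequalities up to $O(\mathbf V/W)$''. A Fej\'er mean of a majorant of $\Psi$ need not majorize $\Psi$. Moreover, your $\Phi^\pm$ can rise by $\asymp\mathbf V$ over an interval of length $\Delta\asymp 1/W$ (for instance when $\Psi$ has a jump of that size), so their uniform distance to their Fej\'er means can be of order $\mathbf V$.

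\emph{The identity $\Psi(u)-\Psi(0)=\int_{(0,u]}d\Psi$ requires $\Psi$ to be right-continuous.} This holds for $\psi$ and for the $\psi_k$ used later in the paper, but it is not among the lemma's hypotheses. For general $\Psi$, write $\Psi(u)=\Psi(u^+)+\sum_j c_j\,\chi_{\{x_j\}}(u)$ with $c_j=\Psi(x_j)-\Psi(x_j^+)$ and $\sum_j|c_j|\le V(\Psi)$. Treat each point indicator as a degenerate arc: the majorant is Selberg's polynomial of mean $\frac{1}{W+1}$ and the minorant is $0$. This changes only the constants.
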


With Ste\u{c}kin's lemma, one can acquire an estimation of the $\Psi$-sum.
\begin{lemma} \label{lem4} 
Let $x\ge10$, $0\le A<B\le\sqrt{x}$ and $\Psi $ be given as above with 
$$\sup_{u\in[0,1]}|\Psi(u)| + V(\Psi)\le \overline{\bV} $$
for some constant $\overline{\bV}>0$.
If the integral of $\Psi$  over $[0,1]$  vanishes, then for any $L\ge1$ and $ R\in\rr$, there is a constant $C(\overline{\bV})>0$ depending only on $\overline{\bV}$ such that
\begin{equation} \label{well2} 
    \left|\sum_{A<n \leq B} \Psi\left(L^{-1}\sqrt{x-n^2}+R\right)\right| \le C(\overline{\bV}) \left( L^{-\frac{11}{41}} x^{\frac{497}{1506}} +L^{\frac{1}{2}} x^{\frac{1}{4}}\right).
\end{equation}
\end{lemma}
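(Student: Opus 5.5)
Apply Ste\v{c}kin's inequality (Lemma \ref{lem3w}) with $S=\nn^*$ and $f(n)=L^{-1}\sqrt{x-n^2}+R$. Since $\int_0^1\Psi=0$, the main term drops out and we are left with
\begin{equation*}
\frac{B-A}{W}+\sum_{m=1}^W\frac1m\Bigl|\sum_{A<n\le B}e\bigl(mL^{-1}\sqrt{x-n^2}\bigr)\Bigr|,
\end{equation*}
where $e(u)=e^{2\pi i u}$. The shift $R$ only contributes the unimodular factor $e(mR)$, so every bound below is uniform in $R$. The range $n\le 1$, the endpoints, and any $O(1)$ terms are absorbed into the constant using $\sup|\Psi|\le\overline{\bV}$.

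\textbf{Dyadic decomposition.} Split $(A,B]$ dyadically. First take blocks with $n\sim N\le\sqrt{x}/2$. Put $g(u)=mL^{-1}\sqrt{x-u^2}$. Writing $u=\sqrt{x}\,s$, we have $g(u)=mL^{-1}\sqrt{x}\,\phi(s)$ with $\phi(s)=\sqrt{1-s^2}$. Hence
\begin{equation*}
g^{(k)}(u)=mL^{-1}x^{(1-k)/2}\phi^{(k)}(u/\sqrt{x}).
\end{equation*}
The derivatives $\phi^{(k)}$ have only finitely many zeros on $(0,1)$. Near $s=0$ they behave like $s^{\,k\bmod 2}$, so odd derivatives are small when $N$ is small.

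\textbf{Short ranges.} For $N$ small relative to $\sqrt{x}$, use the trivial bound or the second derivative test. Here $g''\asymp mL^{-1}x^{-1/2}$, which gives
\begin{equation*}
\ll N\bigl(mL^{-1}x^{-1/2}\bigr)^{1/2}+\bigl(mL^{-1}x^{-1/2}\bigr)^{-1/2}.
\end{equation*}
The last term, summed with weight $1/m$, produces $L^{1/2}x^{1/4}$. This is exactly the second term in \eqref{well2}.

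\textbf{Near the endpoint $n\approx\sqrt{x}$.} Substitute $h=\sqrt{x}-n$. The phase $\sqrt{x-n^2}\approx(2\sqrt{x}h)^{1/2}$ is then a power-type phase in $h$. The same first/second derivative tests, possibly combined with the van der Corput $B$-process, give contributions bounded by $L^{1/2}x^{1/4}$ as well.

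\textbf{Bulk ranges, $N\asymp\sqrt{x}$.} Here $|g^{(k)}|\asymp\lambda_k:=mL^{-1}x^{(1-k)/2}$, except near the finitely many zeros of $\phi^{(5)}$. Apply the 5th derivative test (van der Corput's $k$-th test with $k=5$):
\begin{equation*}
\sum\ll N\lambda_5^{1/30}+N^{1-1/8}\lambda_5^{-1/30}.
\end{equation*}
On the small neighbourhoods where $\phi^{(5)}$ vanishes, $\phi^{(6)}$ does not, since $\phi^{(5)}$ and $\phi^{(6)}$ share no zeros. On those neighbourhoods use the 6th derivative test, with exponents $1/62$ and $1/16$. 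The two sets of intervals cover the whole range with boundedly many pieces, so the estimates are uniform.

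\textbf{Optimizing $W$.} Sum over $m\le W$ and balance against the term $\sqrt{x}/W$. This optimization should yield the exponent pair $L^{-11/41}x^{497/1506}$. The exponents $1/30$ and $1/62$, weighted through $L$, generate the odd denominators $41$ and $753$.

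\textbf{Main obstacle.} I expect the hardest step to be this last combination. One must check that the 6th-test pieces, which are weaker, do not dominate. They live on short intervals, of length $\asymp\delta\sqrt{x}$ around each zero of $\phi^{(5)}$, and the choice of $\delta$ must be balanced against the fact that $\lambda_5$ is degraded by a factor $\delta$ there. One must then verify that the resulting maximum over all ranges, and over $W$, is exactly the stated exponent. If the numerology does not close, I would first try adjusting $\delta$ together with $W$, with each choice depending on $L$.
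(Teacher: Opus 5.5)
There is a genuine gap in your bulk step, and adjusting $W$ or $\delta$ cannot repair it. For $n\asymp\sqrt x$ you have $N\asymp\sqrt x$ and $\lambda_5\asymp mL^{-1}x^{-2}$. The 5th derivative test (Lemma~\ref{ktest}) then gives
\[
N\lambda_5^{1/30}+N^{7/8}\lambda_5^{-1/30}\asymp (m/L)^{1/30}x^{13/30}+(L/m)^{1/30}x^{121/240}.
\]
The second term is worse than the trivial bound $\sqrt x$. The first term, already at $m=L=1$, is $x^{13/30}\gg x^{497/1506}$.

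On a sum of length $\sqrt x$ with this phase, the $k$-th test gets worse as $k$ increases. The first term behaves like $x^{1/2-(k-1)/(2(2^k-2))}$, which is $x^{1/4},x^{1/3},x^{11/28},x^{13/30}$ for $k=2,3,4,5$. The second derivative test, after optimizing $W=x^{1/6}L^{1/3}$, only yields $L^{-1/3}x^{1/3}+L^{1/2}x^{1/4}$. That misses \eqref{well2} for small $L$, in particular for $L=1$, which is exactly the case needed for $I(x)$.

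The vanishing phenomenon you invoke also does not exist for the original phase. Indeed $\phi^{(5)}(s)=-15s(3+4s^2)(1-s^2)^{-9/2}$ has no zero in $(0,1)$. Your second-derivative treatment of the ranges where $n$, or $\sqrt x-n$, is a small power below $\sqrt x$ is harmless. The trouble is everything in between.

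The missing idea is to apply the B-process (Lemma~\ref{lemVB}) \emph{before} any higher derivative test. Do this on each block $U_j=\{n:2^{-j-1}x\le x-n^2<2^{-j}x\}$, where $\Lambda=2^{3j/2}mL^{-1}x^{-1/2}$. This replaces $\sum_{n\in U_j}e(f_m(n))$ by a dual sum with the following features:
\begin{itemize}
\item its length is only $\asymp 2^{j/2}m/L$;
\item its phase is $F_m(s)=\sqrt x\,(m^2L^{-2}+s^2)^{1/2}$;
\item its weight is $g_m\asymp 2^{-3j/4}(m/L)^{-1/2}x^{1/4}$;
\item the cost is $O\bigl(2^{-3j/4}(L/m)^{1/2}x^{1/4}+\log(2^{j/2}m/L+2)\bigr)$.
\end{itemize}
Summed against $1/m$, this $\Lambda^{-1/2}$ error is where $L^{1/2}x^{1/4}$ really comes from.

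Only on the short dual sum do high-order tests pay off. From $F_m^{(5)}\asymp 2^{-3j}(m/L)^{-4}x^{1/2}$, the 5th test gives $(m/L)^{11/30}x^{4/15}$ after the weight, up to a factor $2^{-7j/20}$. Balancing against $\sqrt x/W$ with $W=x^{7/41}L^{11/41}$ gives $L^{-11/41}x^{27/82}$.

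The vanishing lives on the dual side. Write $m_L=m/L$. For $j=0$ the dual range is $s\in(0,m_L]$, and $F_m^{(5)}\propto s(3m_L^2-4s^2)$ vanishes at $s=0$ and at $s=\tfrac{\sqrt3}{2}m_L$. On $(0,\tfrac14 m_L]$ and $[\tfrac12 m_L,m_L]$ one therefore uses the 6th test, with $F_m^{(6)}\asymp m_L^{-5}x^{1/2}$. This yields $(W/L)^{13/31}x^{8/31}+(W/L)^{257/496}x^{15/62}$. Balancing the last term against $\sqrt x/W$, with $W=L^{257/753}x^{128/753}$, is what produces $x^{497/1506}$.
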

Before proceeding to the proof, we would like to explain how the problem of lattice point counting in Cygan--Kor\'anyi ball on Heisenberg groups connects the Gaussian circle problem in  essence, and how the index $\frac{497}{1506}$ in the right hand side of  \eqref{well2} arises.
\begin{remark} \label{reWel}  
(1) Recall that the lattice point discrepancy  \eqref{gauE} of the  Gauss circle problem has the following expression, for $x\ge10$,
\begin{equation} \label{gaup} 
    P_2(x)=-8 \sum_{\sqrt{x/2}<n\le \sqrt{x}} \psi(\sqrt{x-n^2}) +O(x^\frac14),
\end{equation}
cf. e.g.  \cite[(3.57)-(3.58)]{Kra88}. So the $\psi$-sum in  \eqref{gaup} is actually a particular case of that in  \eqref{well2}. Notice that it is widely conjectured that $P_2(x)= O_\ez(x^{\frac14+\ez})$ for any $\ez>0$. Hence one may ask whether the bound $L^{-\frac{11}{41}} x^{\frac{497}{1506}}$ could be improved to $L^{\az}x^{\frac14+\ez}$  with sufficiently small  $\az$. Once this holds, then for any $q\ge4$, our argument can lead to the better upper bound $|\cE(t)|\lsim_\ez t^{2q-1+\ez}$, which will furnish a verification of  Gath's conjecture. On the other hand, such $\Psi$-sum  occurs naturally in the error term of our Cygan--Kor\'anyi ball problem  (see  \eqref{Eqt}-\eqref{ff101}), whence it offers a new  evidence that this problem should be as challenging as the Gauss circle problem.

(2) In the paper of Nieland  \cite[]{Nie28} which handled the Gauss circle problem, the $ k$-th Derivative Test (cf. e.g. Lemma  \ref{ktest} below) due to van der Corput played an important role, where the optimal choice of $k$ is $5$. In our situation, the dual phases (derived from the B-process) of the trigonometric sums possess a similar structure (indeed, only an additional parameter is involved here); yet the summation ranges can be much larger, rendering the problem more complicated. Indeed, on account of the $5$-th Derivative Test, a rigid lower bound condition on the $5$-th derivative of the phase  is necessary. However, unlike the Gauss circle problem, the summation range in our case can be substantially larger, which results in a vanishing phenomenon for the $5$-th derivative (see Figure \ref{fig1} below for an illustration). To compensate for this deficiency, we select a pair of derivatives such that the non-vanishing interval of one covers the zeros of the other. It turns out that the optimal pair of orders of differentiation is $(5,6)$. The exponent deduced from the $6$-th derivative is exactly $\frac{497}{1506}$ appearing in \eqref{well2} for the case $q\ge4$, which is inferior to the exponent $\frac{27}{82}$  that could be  possibly obtainable from the $5$-th derivative. On the other hand, the parameter $L$  corresponds to the summation index $k $ in Landau's formula \eqref{ff61}; in the summing process, we lose some precision for $q=3$, compared with the higher-dimensional case. 
\end{remark}

\begin{proof}[Proof of Lemma  \ref{lem4}]
We are mainly devoted to the case where $B \leq x^{\frac{1}{2}}\left(1-\frac{1}{2} x^{-\frac{1}{4}}\right)^{\frac{1}{2}} $, and the rest situation is plain as shall be seen. Denote $\mathbb{N}_B:=\{n \in \mathbb{N}: n \leq B\} .$ For each integer $0 \leq j \leq \frac{\log x}{4 \log 2}$,
 we define the set 
\begin{equation*}
    U_j=U_j(B):=\left\{n \in \mathbb{N}_B:  2^{-j-1} x \leq x-n^2<2^{-j} x\right\}.
\end{equation*}
Note that   $U_j$ might be empty for large $j$, but in this case the estimate is trivial. Thus one  can assume that all $U_j$'s are nonempty henceforward.

Let $m\in\nn^*$ and define $ f_m(s):=mL^{-1} \sqrt{x-s^2} $, $s\in[0,\sqrt{x}]$. We first claim that, for any $0 \leq j \leq \frac{\log x}{4 \log 2}$,
\begin{equation} \label{l4w} 
    \begin{aligned}
    \sum_{n \in U_j} e^{2 \pi i f_m(n)} &=  e^{-\frac{\pi i}{4}} \sum_{n \in \widetilde{U}_j} g_m(n)\, e^{2 \pi i F_m(n)} \\
    &\qquad +O\left(2^{-\frac{3}{4} j} m^{-\frac{1}{2}} L^{\frac{1}{2}} x^{\frac{1}{4}} + \log \left(2^{\frac{j}{2}} m L^{-1}+2\right)\right) ,
    \end{aligned}
\end{equation}
where 
\begin{gather*}
\widetilde{U}_j:=\left\{n \in \mathbb{N}: m L^{-1}\left(2^j-1\right)^{\frac{1}{2}}<n \leq  \min\left\{m L^{-1}\left(2^{j+1}-1\right)^{\frac{1}{2}},|f'_m(B)|\right\}\right\} , \\[2mm]
    g_m(s)=m L^{-1} x^{\frac{1}{4}}\left(m^2 L^{-2}+s^2\right)^{-\frac{3}{4}} , \quad F_m(s):=x^{\frac{1}{2}}\left(m^2 L^{-2}+s^2\right)^{\frac{1}{2}} ,
\end{gather*}
and the implicit constant in the error term is absolute. To show  \eqref{l4w}, we use the B-process of van der Corput. The following version can be seen from  \cite[p. 210]{IK04}.
\begin{lemma}[B-process]  \label{lemVB} 
Let $f\in C^4([a,b])$ be a real-valued function. If there exist constants $\Lz>0$, $\eta\ge1$ such that
\begin{equation*}
    \Lambda \leq\left|f^{\prime \prime}(s)\right| \leq \eta \Lz, \quad\left|f^{(3)}(s)\right| \leq \eta \Lz(b-a)^{-1}, \quad\left|f^{(4)}(s)\right| \leq \eta \Lz(b-a)^{-2},
\end{equation*}
then one has
\begin{equation*}
    \begin{aligned}
   \sum_{a<k\le b} e^{2 \pi i f(k)} & =e^{\frac{\pi i}{4} \sgn f^{\prime\prime}\left(\frac{a+b}{2}\right)} \sum_{\left.n \in f^{\prime}((a, b])\right)} e^{2 \pi i \left(f\left(s_n\right)-n s_n\right)}\left|f^{\prime \prime}\left(s_n\right)\right|^{-\frac{1}{2}} \\
    &\qquad +  O\left(\Lambda^{-\frac{1}{2}}+\eta^2 \log \left(\left|f^{\prime}(b)-f^{\prime}(a)\right|+2\right) \right).
    \end{aligned}
\end{equation*}
Here $s_n$ is the unique solution of the equation $f'(s)=n$, and the implicit constant in the error term is absolute.
\end{lemma}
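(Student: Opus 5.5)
This lemma is the classical van der Corput B-process, and the plan is to derive it from truncated Poisson summation followed by stationary phase. Assume first that $f''>0$ on $[a,b]$; the case $f''<0$ follows by complex conjugation, which accounts for the factor $\sgn f''$. Then $f'$ is strictly increasing. Put $\alpha=f'(a)$ and $\beta=f'(b)$.

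\emph{Step 1: truncated Poisson summation.} Apply the Kusmin--Landau/van der Corput truncated Poisson formula to the monotone phase $f(s)-ns$:
\begin{equation*}
\sum_{a<k\le b} e^{2\pi i f(k)}=\sum_{\alpha-\frac12<n<\beta+\frac12}\int_a^b e^{2\pi i (f(s)-ns)}\,ds+O\bigl(\log(\beta-\alpha+2)\bigr).
\end{equation*}
This rests on the Fourier expansion of $\psi$ together with the first-derivative bound $\int_a^b e^{2\pi i(f(s)-ns)}ds\lsim 1/\min_{[a,b]}|f'(s)-n|$ for $n$ outside $[\alpha-\frac12,\beta+\frac12]$. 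Summing these bounds over such $n$ gives a convergent tail plus the logarithm. The $O(1)$ values of $n$ with $\alpha-\frac12<n<\beta+\frac12$ but $n\notin f'((a,b])$ have no stationary point in the interval. For them the second-derivative test gives the bound $\Lz^{-1/2}$, which is admissible.

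\emph{Step 2: stationary phase for each $n\in f'((a,b])$.} Let $s_n$ be the solution of $f'(s)=n$, and set $h(s)=f(s)-ns$. Taylor-expand $h$ at $s_n$ to fourth order:
\begin{equation*}
h(s)=h(s_n)+\tfrac12 f''(s_n)(s-s_n)^2+\tfrac16 f'''(s_n)(s-s_n)^3+O\bigl(\eta\Lz(b-a)^{-2}(s-s_n)^4\bigr).
\end{equation*}
On a window $|s-s_n|\le\delta$ the cubic and quartic terms are small compared with the quadratic one. Replacing the integral over this window by the complete Fresnel integral gives the main term $e^{\pi i/4}e^{2\pi i h(s_n)}f''(s_n)^{-1/2}$. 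The correction coming from the cubic term is handled by expanding $e^{2\pi i\cdot(\text{cubic})}$ to first order; the odd part integrates to zero, so the first surviving error is of second order in $f'''$ and first order in $f^{(4)}$. Outside the window, $|h'(s)|\ge\Lz|s-s_n|$. Integration by parts then leaves boundary contributions of size $\lsim 1/|f'(a)-n|+1/|f'(b)-n|$ (capped at $\Lz^{-1/2}$), plus a tail controlled by $\int|h''/h'^2|$.

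\emph{Step 3: summation over $n$.} The number of $n$ involved is $\lsim \eta\Lz(b-a)+1$. The endpoint terms $1/|f'(a)-n|$ and $1/|f'(b)-n|$, summed over $n$, give $O(\log(|f'(b)-f'(a)|+2))$. For the interior stationary-phase remainders, the hypotheses $|f'''|\le\eta\Lz(b-a)^{-1}$ and $|f^{(4)}|\le\eta\Lz(b-a)^{-2}$ make each remainder $\lsim \eta^2\Lz^{-1}(b-a)^{-1}$ or smaller. Summing over the $\lsim\eta\Lz(b-a)$ values of $n$ and absorbing into the logarithm yields the total error $O(\Lz^{-1/2}+\eta^2\log(|f'(b)-f'(a)|+2))$.

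The main obstacle is Step 2: carrying out the stationary-phase expansion uniformly and with explicit $\eta$-dependence. This is especially delicate when $s_n$ lies close to $a$ or $b$, where the Fresnel window is truncated and the boundary terms must be matched against the $1/|f'(a)-n|$ bounds without losing more than $\eta^2$. Since this is exactly the version recorded in \cite[p.~210]{IK04}, I would first try to follow that proof, which uses the weighted stationary phase lemma, and track the constants rather than rederive it from scratch.
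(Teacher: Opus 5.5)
The paper gives no proof of this lemma; it is quoted from Iwaniec--Kowalski (p.~210). So your fallback of following that proof is the paper's route. Your preliminary steps are also fine: the reduction to $f''>0$ by conjugation, the truncated Poisson step, and the bound $\Lambda^{-1/2}$ for the $O(1)$ frequencies in $(\alpha-\frac12,\beta+\frac12)\setminus f'((a,b])$.

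The self-contained Step~2, however, does not deliver the stated error. Expanding $e^{2\pi i f'''(s_n)(s-s_n)^3/6}$ to first order is only useful where the cubic term is $O(1)$ in absolute size, i.e.\ $\delta\lesssim((b-a)/(\eta\Lambda))^{1/3}$. Being small \emph{relative} to the quadratic term (which only needs $\delta\lesssim(b-a)/\eta$) does not suffice. Outside such a window you use only $|h'(s)|\ge\Lambda|s-s_n|$, and this leaves several terms of size $\asymp 1/(\Lambda\delta)$ per frequency:
\begin{itemize}
\item the integration-by-parts boundary terms at the inner edges $s_n\pm\delta$, which your sketch omits (you list only the terms at $a$ and $b$);
\item the bound for $\int|h''/h'^2|$;
\item the Fresnel tails discarded when completing the window integral.
\end{itemize}
Summed over the $\gtrsim\Lambda(b-a)$ frequencies, this gives $\gtrsim(b-a)/\delta\gtrsim(\Lambda(b-a)^2)^{1/3}$, which is unbounded. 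This is exactly the mechanism behind the extra remainder $(b-a)\lambda_2^{1/5}\lambda_3^{1/5}$ in the classical B-process that assumes only a bound on $f'''$.

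The hypothesis on $f^{(4)}$ must instead be used to treat the non-quadratic part of the phase on all of $[a,b]$, not on a Fresnel window. For example, substitute $h(s)-h(s_n)=\tfrac12 f''(s_n)u^2$, so the phase becomes exactly quadratic and $ds=\phi(u)\,du$. Here $\phi(0)=1$, $|\phi'|\lesssim_\eta(b-a)^{-1}$ and $|\phi'(u)-\phi'(0)|\lesssim_\eta|u|(b-a)^{-2}$; this is where $f'''$ and $f^{(4)}$ enter. One integration by parts then produces:
\begin{itemize}
\item the Fresnel main term;
\item the exact endpoint terms $e^{2\pi i h(b)}/(2\pi i h'(b))$ and its analogue at $a$, bounded by $\min\{\Lambda^{-1/2},|f'(b)-n|^{-1}\}$ etc., so they sum to the logarithm;
\item an interior error $\lesssim_\eta \Lambda^{-1}(b-a)^{-1}$ per frequency.
\end{itemize}
No inner-edge terms appear at all. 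This is what the weighted stationary-phase lemma in Iwaniec--Kowalski packages.

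Second, the $\eta$-bookkeeping in Step~3 does not close, even granting your per-frequency remainder $\eta^2\Lambda^{-1}(b-a)^{-1}$. Multiplying by the count $\eta\Lambda(b-a)+1$ gives $\eta^3+\eta^2/(\Lambda(b-a))$. This cannot be ``absorbed into the logarithm'' to produce $\eta^2\log(|f'(b)-f'(a)|+2)$. Moreover, its second piece exceeds $\Lambda^{-1/2}$ once $\Lambda(b-a)^2<\eta^4$, so that regime has to be disposed of separately, which you do not do. Reaching the stated $\eta^2$ with an absolute constant needs finer bookkeeping than you give, or is simply taken from the cited source, as the paper does.

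For the paper this is immaterial. In its only application (to $f_m$ on $U_j$), $\eta$ is an absolute constant, so any fixed power of $\eta$ would do. Still, as written, your argument proves at most a version with a worse power of $\eta$, and its decisive stationary-phase estimate rests on the citation.
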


Observe that
\begin{equation*}
   U_j= \left(x^{\frac{1}{2}}\left(1-2^{-j}\right)^{\frac{1}{2}}, \min\left\{ x^{\frac{1}{2}}\left(1-2^{-j-1}\right)^{\frac{1}{2}},\, B\right\}\right] \cap \nn.
\end{equation*}
A trivial calculation gives 
\begin{equation*}
    \begin{array}{ll}
    f_m^{\prime}(s)=- mL^{-1} s\left(x-s^2\right)^{-\frac{1}{2}}, & f_m^{\prime \prime}(s)=- mL^{-1} x\left(x-s^2\right)^{-\frac{3}{2}}, \\[2mm]
    f_m^{(3)}(s)=-3 mL^{-1} x s\left(x-s^2\right)^{-\frac{5}{2}}, & f_m^{(4)}(s)=-3 mL^{-1} x\left(x+4 s^2\right)\left(x-s^2\right)^{-\frac{7}{2}} .
    \end{array}                
\end{equation*}
Then it is routine to check all conditions in Lemma  \ref{lemVB} are fulfilled  with $\Lambda=2^{\frac{3}{2} j} mL^{-1} x^{-\frac{1}{2}} $ and certain $\eta \geq 2^{\frac{3}{2}}$ being sufficiently large, from which and the fact that both  $g_m$ and $F_m$ are even functions, the forgoing claim follows, after some simple estimations.

Next we dominate the second term in  \eqref{l4w}. An argument of summation by parts will reduce the problem to  estimating exponential sums of the type $\sum_{n \in \widehat{U}_j}  e^{2 \pi i F_m(n)}$ with $\widehat{U}_j$ being some subset of ${U}_j$ (see  \eqref{set1} and  \eqref{set2} below). To this end, let us further assume that 
\begin{equation} \label{assu} 
    2^{\frac{j}{2}}mL^{-1}\ge1,\qquad \fall 0\le j \leq \frac{\log x}{4 \log 2}.
\end{equation}
In fact, if this does not hold for  $j_0$, then one has
$ 2^{\frac{j_0}{2}}mL^{-1}<1$, which implies that $$\#\,\widetilde{U}_{j_0} \le \left(m L^{-1}\left(2^{j_0+1}-1\right)^{\frac{1}{2}} - m L^{-1}\left(2^{j_0} -1\right)^{\frac{1}{2}}\right) \sim 2^{\frac{j_0}{2}}mL^{-1} \le 1.$$
Since on each $\widetilde{U}_j$,
\begin{equation} \label{gmE} 
    g_m(n) \sim 2^{-\frac{3}{4} j} m^{-\frac{1}{2}} L^{\frac{1}{2}} x^{\frac{1}{4}},
\end{equation}
then for $j_0$ the second term in  \eqref{l4w} can be absorbed by the error term and we are done. 

Another direct computation shows that the  first six derivatives of $x^{-\frac12}F_m$ are:
\begin{gather*}
    \frac{s}{\sqrt{m_L^2+s^2}},\quad\frac{m_L^2}{\left(m_L^2+s^2\right)^{3/2}}, \quad -\frac{3 m_L^2 s}{\left(m_L^2+s^2\right)^{5/2}}, \quad -\frac{3 m_L^2 \left(m_L^2-4 s^2\right)}{\left(m_L^2+s^2\right)^{7/2}}, \\[2mm]
    \frac{15 \left(3 m_L^4 s-4 m_L^2 s^3\right)}{\left(m_L^2+s^2\right)^{9/2}}, \quad \frac{45 \left(m_L^6-12 m_L^4 s^2+8 m_L^2 s^4\right)}{\left(m_L^2+s^2\right)^{11/2}},
\end{gather*}
with $m_L:=m L^{-1}$. Figure  \ref{fig1} depicts the graph for $5,6$-th derivatives of  $ x^{-\frac12}F_m$ in the case where $m=L$.
\begin{figure}[htbp]      
    \centering         
    \includegraphics[width=0.7\textwidth]{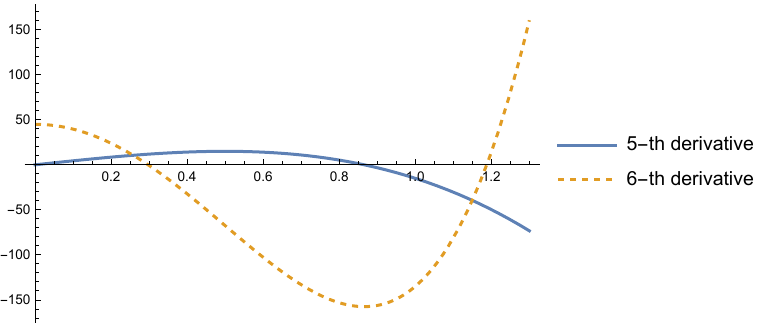}  
    \caption{the $5,6$-th derivatives of $x^{-\frac12}F_m$ (with $m=L$)} \label{fig1}    
\end{figure}

As mentioned above, it is also necessary to distinguish two cases: $j\ge1$ and $j=0$, where the  $5$-th test is best possible for the first case, but the second requires both the $5$-th and $6$-th tests. When $1\le j \leq \frac{\log x}{4 \log 2} $, for any integer $N$ obeying 
\begin{equation} \label{ff65} 
    m L^{-1}\left(2^j-1\right)^{\frac{1}{2}}<N \leq \min \left\{m L^{-1}\left(2^{j+1}-1\right)^{\frac{1}{2}},\left|f_m^{\prime}(B)\right|\right\}, 
\end{equation}
we  introduce 
\begin{equation} \label{set1} 
    \widetilde{U}_{j, N}:=\left\{n \in \mathbb{N}: m L^{-1}\left(2^j-1\right)^{\frac{1}{2}}<n \leq N\right\}.
\end{equation}
While for $j=0$, note that $n\in\widetilde{U}_0$ is equivalent to  $ 0<\frac{n}{m L^{-1}} \leq \min \left\{1,\left|f_m^{\prime}(B)\right| m^{-1} L\right\}$. Given any $\az>0$ we shall write 
\begin{equation*}
    (\az)_B:=\min \left\{\az,\left|f_m^{\prime}(B)\right| m^{-1} L\right\}.
\end{equation*}
And then we decompose  $ \widetilde{U}_0 $ in the form
\begin{equation*}
    \begin{aligned}
    \widetilde{U}_0 & = \left\{n \in \mathbb{N}: \frac{n}{m L^{-1}} \le\left(\frac{1}{4}\right)_B\right\} \bigcup\left\{n \in \mathbb{N}: \frac{1}{4}<\frac{n}{m L^{-1}}<\left(\frac{1}{2}\right)_B\right\} \\
    &\qquad \bigcup\left\{n \in \mathbb{N}: \frac{1}{2} \le \frac{n}{m L^{-1}} \le(1)_B\right\}=:\bigcup_{l=1}^3 \widetilde{U}_0^{(l)}.
    \end{aligned}
\end{equation*}
Correspondingly, for any integer $N_j\ (j=1,2,3)$ subject to 
\begin{equation} \label{ff651} 
    0<N_1 \le\left(\frac{1}{4}\right)_B mL^{-1}<N_2 \le\left(\frac{1}{2}\right)_B mL^{-1}<N_3 \le(1)_B \, mL^{-1},
\end{equation}
we denote
\begin{equation}\label{set2} 
\begin{gathered} 
     \widetilde{U}_{0, N_1}^{(1)}:=\left\{n \in \mathbb{N}: n \leq N_1\right\}, \quad \widetilde{U}_{0, N_2}^{(2)}:=\left\{n \in \mathbb{N}: 4^{-1} mL^{-1} < n \leq N_2\right\}, \\[2mm]
     \widetilde{U}_{0, N_3}^{(3)}:=\left\{n \in \mathbb{N}: 2^{-1}mL^{-1} \leq n \leq N_3\right\} . 
\end{gathered}
\end{equation}

To bound the trigonometric sums over the sets in \eqref{set1} and \eqref{set2}, we invoke the following $k$-th Derivative Test due to van der Corput (see e.g.  \cite[Theorem 8.20]{IK04}).
\begin{lemma}[$k$-th Derivative Test]   \label{ktest} 
Let $k\ge2$. Suppose that $b-a\ge1$, and that $f$ is a real-valued function defined on $(a,b)$ satisfying $\Lz\le |f^{(k)}(s)| \le \eta \Lz$ for some $\Lz>0$ and $\eta\ge1$. Then there is an absolute constant $C$ such that
\begin{equation*}
    \left|\sum_{a<n<b} e^{2\pi i f(n)} \right| \le C \left(\eta^{2^{2-k}} \Lambda^\kappa(b-a)+\Lambda^{-\kappa}(b-a)^{1-2^{2-k}} \right),
\end{equation*}
where $\kappa=\left(2^k -  2\right)^{-1}$.
\end{lemma}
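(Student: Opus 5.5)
We prove the bound by induction on $k$, following the classical van der Corput scheme. The base case $k=2$ comes from the Kusmin--Landau inequality. The inductive step uses Weyl--van der Corput differencing (the A-process), which lowers the order of the derivative by one. Throughout, write $S(f;a,b)=\sum_{a<n<b}e^{2\pi i f(n)}$ and $N=b-a\ge1$. If $\Lz\ge1$ the bound is trivial, because already $\Lz^{\kz}N\ge N\ge|S|$. So we may assume $\Lz<1$.

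\emph{Base case $k=2$, $\kz=\frac12$.} Here the target is $|S|\lesssim \eta\Lz^{1/2}N+\Lz^{-1/2}$. Since $f''$ has constant sign and size in $[\Lz,\eta\Lz]$, $f'$ is monotone and its range has length at most $\eta\Lz N$. Fix $\dz\in(0,\frac12)$ and split $(a,b)$ into at most $\eta\Lz N+2$ subintervals on which $f'$ stays within a single interval $[\nu-\frac12,\nu+\frac12]$, $\nu\in\zz$. Inside each subinterval, separate the set where $\|f'\|<\dz$ from the rest.
\begin{itemize}
\item On the set where $\|f'\|<\dz$, the monotonicity of $f'$ together with $|f''|\ge\Lz$ shows it is an interval of length at most $2\dz/\Lz$. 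We bound this piece trivially by $2\dz/\Lz+1$.
\item On the at most two remaining pieces, $f'-\nu$ is monotone and satisfies $\|f'\|\ge\dz$. The Kusmin--Landau inequality therefore gives $O(\dz^{-1})$ for each piece.
\end{itemize}
Summing over all subintervals gives
\[
|S|\lesssim(\eta\Lz N+1)\,(\dz\Lz^{-1}+\dz^{-1}).
\]
Choosing $\dz=\Lz^{1/2}$ (admissible since $\Lz<1$; if needed one takes $\dz=\min\{\Lz^{1/2},\frac14\}$) yields the claim with $\eta^{2^{0}}=\eta$.

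\emph{Inductive step $k-1\Rightarrow k$, with $k\ge3$.} The first tool is the Weyl--van der Corput inequality: for every integer $1\le H\le N$,
\[
|S|^2\lesssim \frac{N^2}{H}+\frac{N}{H}\sum_{h=1}^{H}|S_h|, \qquad S_h:=\sum_{a<n<b-h}e^{2\pi i (f(n+h)-f(n))}.
\]
Put $g_h(s)=f(s+h)-f(s)$. By the mean value theorem, $g_h^{(k-1)}=h f^{(k)}(\xi)$ for some $\xi$. Hence $h\Lz\le|g_h^{(k-1)}|\le\eta h\Lz$, with the same $\eta$. Set $\kz'=(2^{k-1}-2)^{-1}$. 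The induction hypothesis gives
\[
|S_h|\lesssim \eta^{2^{3-k}}(h\Lz)^{\kz'}N+(h\Lz)^{-\kz'}N^{1-2^{3-k}}.
\]
Next sum over $h\le H$, using $\sum_{h\le H} h^{\pm\kz'}\sim H^{1\pm\kz'}$ (valid since $\kz'\le\frac12<1$). This gives
\[
|S|^2\lesssim \frac{N^2}{H}+\eta^{2^{3-k}}H^{\kz'}\Lz^{\kz'}N^2+H^{-\kz'}\Lz^{-\kz'}N^{2-2^{3-k}}.
\]
We balance the first two terms by choosing $H\approx \Lz^{-\kz'/(1+\kz')}$. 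Taking square roots then produces
\[
|S|\lesssim \eta^{2^{2-k}}\Lz^{\kz}N+(\text{remaining terms}),
\]
where $\kz=\frac{\kz'}{2(1+\kz')}=\frac1{2^k-2}$. This exponent identity, $\frac{\kz'}{2+2\kz'}=\frac{1}{2/\kz'+2}$, is exactly what makes the exponents close up. The third term, evaluated at this $H$, should combine with the boundary cases below into $\Lz^{-\kz}N^{1-2^{2-k}}$.

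\emph{Main obstacle.} The delicate point is the bookkeeping when the optimal $H$ is not admissible, i.e.\ when $H<1$ or $H>N$. We split into cases. If $\Lz^{-\kz'/(1+\kz')}>N$, we take $H=N$; then the term $N^2/H=N$ must be dominated by $\Lz^{-\kz}N^{1-2^{2-k}}$, which we verify from the inequality $N<\Lz^{-\kz'/(1+\kz')}$ and the exponent relations between $\kz$ and $\kz'$. Along the way we check that the third term never exceeds the sum of the two claimed terms. In that check we track the powers of $\eta$ so that only $\eta^{2^{2-k}}$ appears, which is why the $\eta$-dependence is placed on the first term only. Should the single-scale choice of $H$ leave a mismatched cross term, the fallback is to choose $H$ as the minimum of the two balancing values and compare the terms case by case. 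This is routine but must be done with care for every $k$.
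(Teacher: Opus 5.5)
Your scheme (Kusmin--Landau for $k=2$, then Weyl--van der Corput differencing with $g_h^{(k-1)}=hf^{(k)}(\xi)$ and induction on $k$) is the standard proof of this test. The paper does not prove the lemma itself; it cites Iwaniec--Kowalski, Theorem~8.20, which rests on this same induction. Your base case and the identity $\kappa=\kappa'/(2+2\kappa')$ are correct.

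\textbf{The gap.} The boundary case you single out as the main obstacle does not work as you propose. Write $\Lambda^{-\kappa'/(1+\kappa')}=\Lambda^{-2\kappa}$. When $N<\Lambda^{-2\kappa}$ you take $H=N$ and assert that the third term never exceeds the target. At $H=N$ the third term is $N^{2-2^{3-k}-\kappa'}\Lambda^{-\kappa'}$. Since $\kappa'-2\kappa=2\kappa\kappa'$, its ratio to $\Lambda^{-2\kappa}N^{2-2^{3-k}}$ (the square of your second target term) is $(N\Lambda^{2\kappa})^{-\kappa'}$. This exceeds $1$ precisely in the regime $N<\Lambda^{-2\kappa}$, and it is unbounded (take $N=1$, $\Lambda\to0$). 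Already for $\eta=1$ the first target term is dominated by the second in this regime, so differencing with $H=N$ misses the claimed bound by an unbounded factor. Neither careful bookkeeping nor your fallback of taking the minimum of two balancing values repairs this.

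\textbf{The fix.} In that regime, do not difference at all. Use the trivial bound $|S|\le N+1\le 2N$. Since $N\ge1$, $2^{2-k}\le\frac12$ for $k\ge3$, and $N^{1/2}<\Lambda^{-\kappa}$,
\[
N=N^{2^{2-k}}N^{1-2^{2-k}}\le N^{1/2}N^{1-2^{2-k}}<\Lambda^{-\kappa}N^{1-2^{2-k}}.
\]
In the complementary case $\Lambda^{-2\kappa}\le N$ nothing remains to be combined. Take $H=\lfloor\Lambda^{-2\kappa}\rfloor\in[1,N]$ (recall $\Lambda<1$). Then
\[
H^{-\kappa'}\Lambda^{-\kappa'}\asymp\Lambda^{-\kappa'(1-2\kappa)}=\Lambda^{-2\kappa},
\]
so the third term is exactly $\Lambda^{-2\kappa}N^{2-2^{3-k}}$. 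Taking square roots gives the claimed bound directly. With this two-case split the induction closes.

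\textbf{Uniformity in $k$.} If you want $C$ independent of $k$, as stated, note that the recursion has the form $C_k\le\max\{2,\sqrt{c_0(1+C_{k-1})}\}$ with $c_0$ absolute. This holds because the sums $\sum_{h\le H}h^{\pm\kappa'}$ and the rounding of $H$ cost uniformly bounded factors when $\kappa'\le\frac12$. Hence the constants stay bounded.
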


As $j\ge1$, we can check that
\begin{equation} \label{chk1} 
    \left|F_m^{(5)}(n)\right| \sim x^{\frac{1}{2}} m^2 L^{-2}\left(\left(2^j-1\right)^{\frac{1}{2}} m L^{-1}\right)^{-6} \sim  2^{-3 j} m^{-4} L^4 x^{\frac{1}{2}}  
\end{equation}
uniformly on $ \widetilde{U}_{j, N}$, and the length of the interval fulfills  $b-a \lsim 2^{\frac{j}2}mL^{-1}$. Then via the $5$-th Derivative Test along with the assumption  \eqref{assu},  we obtain that  
\begin{equation} \label{ff71} 
    \left|\sum_{n \in \widetilde{U}_{j, N}} e^{2 \pi i F_m(n)}\right| \lsim 2^{\frac{2}{5} j}\left(m L^{-1}\right)^{\frac{13}{15}}  x^{\frac{1}{60}}+2^{\frac{43}{80} j}\left(m L^{-1}\right)^{\frac{121}{120}} x^{-\frac{1}{60}}.
\end{equation}

As $j=0$, the interval length  of every $\widetilde{U}_{0, N_l}^{(l)}$ ($l=1,2,3$) can be majorized by $C\,mL^{-1}$ for some absolute constant $C$. Moreover, we can justify that  \eqref{chk1} still holds uniformly on $\widetilde{U}_{0, N_2}^{(2)}$; and for $j=1,3$ we use the $6$-th Derivative Test instead, with the following
\begin{equation*}
   \left|F_m^{(6)}(n)\right| \sim    m^{-5} L^5 x^{\frac{1}{2}}, \quad \mbox{on}\ \ \widetilde{U}_{0, N_1}^{(1)} \cup \widetilde{U}_{0, N_3}^{(3)}.
\end{equation*}
In this way one can get that, under assumption  \eqref{assu},
\begin{equation} \label{ff72} 
    \begin{gathered}
    \left|\sum_{n \in \widetilde{U}_{0, N_2}^{(2)}} e^{2 \pi i F_m(n)}\right| \lesssim\left(m L^{-1}\right)^{\frac{13}{15}} x^{\frac{1}{60}}+\left(m L^{-1}\right)^{\frac{121}{120}} x^{-\frac{1}{60}} , \\
    \left|\sum_{n \in \widetilde{U}_{0, N_1}^{(1)}} e^{2 \pi i F_m(n)}\right| + \left|\sum_{n \in \widetilde{U}_{0, N_3}^{(3)}} e^{2 \pi i F_m(n)}\right| \lesssim\left(m L^{-1}\right)^{\frac{57}{62}} x^{\frac{1}{124}}+\left(m L^{-1}\right)^{\frac{505}{496}} x^{-\frac{1}{124}}.   
    \end{gathered}
\end{equation}
As a result, from partial summation and the monotonicity of $g_m$, together with  \eqref{ff71} and  \eqref{gmE}, we conclude that, for $1  \le j \leq \frac{\log x}{4 \log 2}$, 
\begin{equation*}
    \begin{aligned}
    \left|\sum_{n \in \widetilde{U}_j} g_m(n) e^{2 \pi i F_m(n)}\right| & \lesssim g_m\left(m L^{-1}\left(2^j-1\right)^{\frac{1}{2}}\right) \cdot \max _{N \ \begin{footnotesize}
    \mbox{satisfying  \eqref{ff65}}
    \end{footnotesize}}\left|\sum_{n \in \widetilde{U}_{j, N}} e^{2 n i F_m(n)}\right| \\[2mm]
    & \lesssim 2^{-\frac{7}{20} j}\left(m L^{-1}\right)^{\frac{11}{30}} x^{\frac{4}{15}}+2^{-\frac{17}{80} j}\left(m L^{-1}\right)^{\frac{61}{120}} x^{\frac{7}{30}} .
    \end{aligned}
\end{equation*}
As to $j=0$, similarly for $\widetilde{U}_0^{(2)}$ and using  \eqref{ff72} for $\widetilde{U}_0^{(l)}\, (l=1,3)$ in place of  \eqref{ff71}, we can deduce that
\begin{gather}
    \left|\sum_{n \in \widetilde{U}_0^{(2)}} g_m(n) e^{2 \pi i F_m(n)}\right| \lesssim\left(m L^{-1}\right)^{\frac{11}{30}} x^{\frac{4}{15}}+\left(m L^{-1}\right)^{\frac{61}{120}} x^{\frac{7}{30}} , \nonumber\\
    \left|\sum_{n \in \widetilde{U}_0^{(1)}} g_m(n) e^{2\pi i F_m(n)}\right|+\left|\sum_{n \in \widetilde{U}_0^{(3)}} g_m(n) e^{2 \pi i F_m(n)}\right| 
     \lesssim\left(m L^{-1}\right)^{\frac{13}{31}} x^{\frac{8}{31}}+\left(m L^{-1}\right)^{\frac{257}{496}} x^{\frac{15}{62}} . \label{ff91} 
\end{gather}
Consequently, from   \eqref{l4w} it follows that, whenever $1  \le j \leq \frac{\log x}{4 \log 2}$,
\begin{equation} \label{ff73} 
    \begin{aligned}
    \left|\sum_{n \in U_j} e^{2 \pi i f_m(n)}\right|&  \lesssim  2^{-\frac{7}{20 } j}\left(m L^{-1}\right)^{\frac{11}{30}} x^{\frac{4}{15}}+2^{-\frac{17}{80} j}\left(m L^{-1}\right)^{\frac{61}{120}} x^{\frac{7}{30}} \\
    &\qquad +2^{-\frac{3}{4} j} m^{-\frac{1}{2}} L^{\frac{1}{2}} x^{\frac{1}{4}}+\log \left(2^{\frac{j}{2}} m L^{-1}+2\right).
    \end{aligned}
\end{equation}

Let $W\ge2$ be chosen. Through  \eqref{ff73}, an application of Lemma  \ref{lem3w}  leads to that, for $1  \le j \leq \frac{\log x}{4 \log 2}$, 
\begin{equation*} \label{ff81} 
    \begin{aligned}
    \left|\sum_{n \in U_j} \Psi\left(L^{-1}\sqrt{x-n^2}+R\right)\right| & \lesssim_{\overline{\bV}} 2^{-j} W^{-1} x^{\frac{1}{2}}+\sum_{1 \leq m \leq W} \frac{1}{m}\left|\sum_{n \in U_j} e^{2 \pi i f_m{(n)}}\right| \\
    & \lesssim 2^{-j} W^{-1} x^{\frac{1}{2}}+2^{-\frac{7}{20 }j}\left(W L^{-1}\right)^{\frac{11}{30}} x^{\frac{4}{15}}+2^{-\frac{17}{80} j}\left(W L^{-1}\right)^{\frac{61}{120}} x^{\frac{7}{30}} \\[2mm]
    &\qquad +2^{-\frac{3}{4} j} L^{\frac{1}{2}} x^{\frac{1}{4}}+j+\log ^2 W.
    \end{aligned}
\end{equation*}
We balance the first two terms in  the right hand side of the above inequality, which decides that $W=2^{-\frac{39}{82} j} x^{\frac{7}{41}} L^{\frac{11}{41}},$ thereby showing  
\begin{equation} \label{ff82} 
    \begin{aligned}
    \left|\sum_{n \in U_j} \Psi\left(L^{-1}\sqrt{x-n^2}+R\right)\right| &\lesssim_{\overline{\bV}} 2^{-\frac{43}{82} j} L^{-\frac{11}{41}} x^{\frac{27}{82}}+2^{-\frac{149}{328 } j} L^{-\frac{61}{164}} x^{\frac{105}{328}} \\[-4mm]
    &\qquad +2^{-\frac{3}{4} j} L^{\frac{1}{2}} x^{\frac{1}{4}}+j+\log ^2(L x) \\[2mm]
    & \le 2^{-\frac{149}{328} j} L^{-\frac{11}{41}} x^{\frac{27}{82}}+2^{-\frac{3}{4} j} L^{\frac{1}{2}} x^{\frac{1}{4}}+j+\log ^2(L x) .
    \end{aligned}
\end{equation}
While  for $j=0$ we have   
\begin{equation} \label{ff83} 
\left|\sum_{n \in U_0} \Psi\left(L^{-1}\sqrt{x-n^2}+R\right)\right| \lesssim_{\overline{\bV}}  L^{-\frac{208}{753}}  x^\frac{497}{1506}+L^{\frac{1}{2}} x^{\frac{1}{4}}.  
\end{equation}
As a matter of fact, it is easy to see that \eqref{ff82} is still  valid for $j=0$  with $ U_j$ on the left hand side replaced by $U_0^{(2)}$; and likewise, on account of  Lemma \ref{lem3w} and  \eqref{l4w} and \eqref{ff91}, it holds
\begin{equation*}
    \begin{aligned}
    &\left|\sum_{n \in U_0^{(1)}} \Psi\left(L^{-1}\sqrt{x-n^2}+R\right)\right| + \left|\sum_{n \in  U_0^{(3)}} \Psi\left(L^{-1}\sqrt{x-n^2}+R\right)\right| \\
    &\qquad \lesssim_{\overline{\bV}}  W^{-1} x^{\frac{1}{2}}+\left(W L^{-1}\right)^{\frac{13}{31}} x^{\frac{8}{31}}+\left(W L^{-1}\right)^{\frac{257}{496}} x^{\frac{15}{62}}     +L^{\frac{1}{2}} x^{\frac{1}{4}}.
    \end{aligned}
\end{equation*}
Therefore, a balance between the first and the third terms produces   $W=L^{\frac{257}{753}} x^{\frac{128}{753}}$. This forces that   the left hand side of the above inequality can be controlled by
\begin{equation*}
 L^{-\frac{257}{753}} x^{\frac{497}{1506}}+L^{-\frac{208}{753}} x^{\frac{248}{753}}  +L^{\frac{1}{2}} x^{\frac{1}{4}}+\log ^2(L x) \lsim  L^{-\frac{208}{753}} x^{\frac{497}{1506}}+L^{\frac{1}{2}} x^{\frac{1}{4}},
\end{equation*}
with the implicit constant depending only on $ \overline{\bV}$. This proves  \eqref{ff83}.

We are in the position to demonstrate  \eqref{well2}. If $B \leq x^{\frac{1}{2}}\left(1-\frac{1}{2} x^{-\frac{1}{4}}\right)^{\frac{1}{2}} $, then it yields from  \eqref{ff82}-\eqref{ff83} that
\begin{equation*}
    \begin{aligned}
    \left|\sum_{A<n \leq B} \Psi\left(L^{-1}\sqrt{x-n^2}+R\right)\right| & \le\left|\sum_{0<n \leq B} \Psi\left(L^{-1}\sqrt{x-n^2}+R\right)\right|+\left|\sum_{0<n \leq A} \cdots\right| \\
    & \le \sum_{0 \leq j \leq \frac{\log x}{4 \log 2}}\left(\left|\sum_{n \in U_j(A)} \cdots\right|+  \left|\sum_{n \in U_j(B)} \cdots\right| \right) \\
    & \lesssim_{\overline{\bV}} L^{-\frac{11}{41}} x^{\frac{497}{1506}}+L^{\frac{1}{2}} x^{\frac{1}{4}},
    \end{aligned}
\end{equation*}
as required. On the contrary, if $B > x^{\frac{1}{2}}\left(1-\frac{1}{2} x^{-\frac{1}{4}}\right)^{\frac{1}{2}}=:\widetilde{B}$, we split 
\begin{equation*}
    (A,B]\cap\nn = ((A,\widetilde{B}]\cap\nn )\cup ((\widetilde{B},B]\cap\nn).
\end{equation*}
Thus the summation on the first range can be majorized as in the former estimation; while for the second one, the summation over it can be seen to be bounded by 
\begin{equation*}
  \left|\sum_{\widetilde{B}<n \le B} \Psi\left(L^{-1}\sqrt{x-n^2}+R\right)\right| \le  \sum_{\widetilde{B}<n \le B}  \overline{\bV} \le \overline{\bV} (B-\widetilde{B}+1) \lsim_{\overline{\bV}} x^{\frac{1}{4}}.
\end{equation*}
From these, the proof of  Lemma  \ref{lem4} is  completed.
\end{proof}
\subsection{Proof of Theorem   \ref{mainT}}  \label{prft} 
Let $q\ge3$. In what follows,   the letters $m$ and $n$ will be reserved for integer vectors in the summation, whose dimensions are evident from the text.

The defining inequality for $\mathcal B_t$ gives
\begin{align*}
    \#\left(\mathbb{Z}^{2 q+1} \cap \dz_t\, \cB_1\right) &= \sum_{ \left(\sum_{j=1}^{2q} m_j^2\right)^2+n^2 \leq t^4} 1 \\
    &= \sum_{|n|\le t^2}\,\sum_{\sum_{j=1}^{2q} m_j^2\le \sqrt{t^4-n^2}} 1 \\
    &=  \sum_{|n|\le t^2} \vol(B_{2q}(1)) (t^4-n^2)^{\frac{q}2} + \sum_{|n|\le t^2} P_{2q}(\sqrt{t^4-n^2}).
\end{align*}
From  \cite[Lemma 3.12]{Kra88} (with $k=2,\,\nu =\frac{q+1}{2}$ and $x=t^2$ therein) and  \eqref{ffaa}, the first resulting term equals
\begin{equation*}
    \frac{\Gamma\left(\frac{q}{2}+1\right)}{\Gamma\left(\frac{q}{2}+\frac{3}{2}\right)} \frac{\pi^{q+\frac{1}{2}}}{\Gamma(q+1)} t^{2 q+2}+O\left(t^q\right) .
\end{equation*}
Note that
\begin{equation*}
    \begin{aligned}
    \operatorname{vol}\left(\cB_1\right) & =2 \int_0^1 \int_{|v| \le\left(1-w^2\right)^{\frac{1}{4}}} d v d w=\frac{2 \pi^q}{\Gamma(q+1)} \int_0^1\left(1-w^2\right)^{\frac{q}{2}} d w \\
    & =\frac{\pi^q}{\Gamma(q+1)} \rB\left(\frac{q}{2}+1, \frac{1}{2}\right)=\frac{\pi^{q+\frac{1}{2}} \Gamma\left(\frac{q}{2}+1\right)}{\Gamma(q+1) \Gamma\left(\frac{q}{2}+\frac{3}{2}\right)},
    \end{aligned}
\end{equation*}
where $\rB(\cdot,\cdot)$ is the usual Beta function. Hence 
\begin{equation} \label{Eqt} 
    \cE_q(t)=  \sum_{|n|\le t^2} P_{2q}(\sqrt{t^4-n^2}) + O(t^q).
\end{equation}

Put $t=x^\frac14$. By Landau's formula  \eqref{ff61} (with $d=2q$) we have 
\begin{equation} \label{ff77} 
    \begin{aligned}
    \sum_{|n| \leq \sqrt{x}} P_{2 q}\left(\sqrt{x-n^2}\right)= & -\frac{2 \pi^{q}}{\Gamma\left(q\right)} I(x)+O\left(x^{\frac{{2q}-1}{4}}+x^{\frac{q+2}{4}} \log x\right) \\
    & + \frac{i \pi^{q}}{\Gamma\left(q\right)} \sum_{2 \leq k \leq x^\frac14} \sideset{}{^*}\sum_{j=1}^{k-1} \frac{S_{j, k}}{k^{2q}}  \frac{  e^{-i \pi \frac{j}{k}}}{\sin \frac{j}{k} \pi}  T_{j,k}(x),
    \end{aligned}
\end{equation}
with
\begin{gather*}
    I(x):=\sum_{0 < n \leq \sqrt{x}}\left(x-n^2\right)^{\frac{q-1}{2}} \psi\left(\sqrt{x-n^2}\right),\\
    T_{j, k}(x) :=\sum_{0<n\le\sqrt{x-k^4}}\left(x-n^2\right)^{\frac{q-1}{2}} e^{-2 \pi i \frac{j}{k}\left[\sqrt{x-n^2}\right]} .
\end{gather*}
Here we have also used the simple inequality
\begin{equation*}
    x^{\frac{q-1}{2}} \sum_{2 \leq k \leq x^{\frac{1}{4}}}   \sum_{j=1}^{k-1} k^{-q}  \left(\frac{k}{j}+\frac{k}{k-j}\right) \lsim x^{\frac{q-1}{2}} \sum_{2 \leq k \leq x^{\frac{1}{4}}} k^{-q+1} \log k \lsim x^{\frac{{2q}-1}{4}}.
\end{equation*}

Via  \cite[Theorem 1.2]{Kra88} we obtain
\begin{align}  
    I(x)  & =(q-1) \int_0^{\sqrt{x}} u\left(x-u^2\right)^{\frac{q-3}{2}} \sum_{0<n \le u} \psi\left(\sqrt{x-n^2}\right) d u \nonumber\\
    & = (q-1) x^{\frac{q-1}{2}} \int_0^1 u\left(1-u^2\right)^{\frac{q-3}{2}} \sum_{0<n \le u \sqrt{x}} \psi\left(\sqrt{x-n^2}\right) d u . \label{ff103}
\end{align}
and
\begin{align}
T_{j,k}(x)
={}&k^{2q-2}A_{j,k}(\sqrt{x-k^4};x)\nonumber\\
&+(q-1)\, x^{\frac{q-1}{2}}
\int_0^{\sqrt{1-\frac{k^4}{x}}}u(1-u^2)^{\frac{q-3}{2}}
 A_{j,k}(u\sqrt x;x)\,du,
\label{ff102}
\end{align}
where   for $0<\rho\le\sqrt{x}$, we have written
\[
A_{j,k}(\rho;x)
:=\sum_{0<n\le \rho}e^{-2\pi i \frac{j}{k}[\sqrt{x-n^2}]}.
\]

Now we introduce a family of $1$-periodic functions $\{\psi_k\}_{k\ge2}$ as follows:
\begin{equation*}
    \psi_k(u) := \begin{cases}1-\frac{1}{k}, & \text { if }\{u\}<\frac{1}{k}, \\ -\frac{1}{k}, & \text { if } \frac{1}{k} \leq\{u\}<1 .\end{cases}
\end{equation*}
Notice that 
\begin{equation*}
    \begin{aligned}
     A_{j,k}(\rho;x)&=\sum_{s=0}^{k-1} e^{-2 \pi i \frac{j}{k} s} \sum_{0 <n \leq \rho, \,\left[\sqrt{x-n^2}\right] \equiv s \bmod k} 1 \\
    & =\sum_{s=0}^{k-1} e^{-2 \pi i \frac{j}{k} s} \sum_{0<n\le \rho,\,\{k^{-1}(x-n^2-s)\}< k^{-1}} 1  \\
    & =\sum_{s=0}^{k-1} e^{-2 \pi i \frac{j}{k} s} \sum_{0<n \leq \rho}{\frac{1}{k}} +\sum_{s=0}^{k-1} e^{-2 \pi i \frac{j}{k} s} \sum_{0<n\le \rho} \psi_k\left(\frac{\sqrt{x-n^2}-s}{k}\right) \\
    & =\sum_{s=0}^{k-1} e^{-2 \pi i \frac{j}{k} s} \sum_{0<n\le \rho} \psi_k\left(\frac{\sqrt{x-n^2}-s}{k}\right) . \\
    \end{aligned}
\end{equation*}
Thus, for $2 \le k\le x^{\frac14}$ and $0<\rho\le\sqrt{x}$,
\begin{equation} \label{ff101} 
    \left| A_{j,k}(\rho;x)\right| \le \min \left\{x^{\frac{1}{2}}, k \cdot \max _{0 \leq s \leq k-1}\left|\sum_{0<n\le \rho} \psi_k\left(\frac{\sqrt{x-n^2}}{k}-\frac{s}{k}\right)\right|\right\}.
\end{equation}

Owing to  \eqref{ff103} and Lemma  \ref{lem4} with ($L=1,\, R=0,\, B=u\sqrt{x}$ and $A=0$) we find that
\begin{equation*}
    |I(x)| \lesssim x^{\frac{q-1}{2}} \cdot x^{\frac{497}{1506}} \int_0^1 u\left(1-u^2\right)^{\frac{q-3}{2}} d u \lesssim x^{\frac{q-1}{2}+\frac{497}{1506}} .
\end{equation*}
On the other hand, it is easily seen that for any $k\ge2$, $\psi_k$ has vanishing integral over $[0,1]$ with 
$$\sup_{u\in[0,1]} |\psi_k(u)| + V(\psi_k) \le 10, $$
whence it follows from  \eqref{ff102}-\eqref{ff101} and Lemma  \ref{lem4} (with $L=k$ and $R=s/k$) that
\begin{equation*}
    \begin{aligned}
    \left|T_{j, k}(x)\right| & \lesssim (k^{{2q}-2}+ x^{\frac{q-1}{2}} ) \min \left\{x^{\frac{1}{2}}, k \cdot\left(k^{-\frac{11}{41}} x^{\frac{497}{1506}}+k^{\frac{1}{2}} x^{\frac{1}{4}}\right)\right\} \\[2mm]
    & \lesssim x^{\frac{q-1}{2}}\left(k^{\frac{30}{41}} x^{\frac{497}{1506}}+\min \left\{x^{\frac{1}{2}}, k^{\frac{3}{2}} x^{\frac{1}{4}}\right\} \right),
    \end{aligned}
\end{equation*}
where we have used $2\le k\le x^{\frac14}$ in the last line. Consequently, by  \eqref{ff77} we arrive at that
\begin{equation*}
    \begin{aligned}
     &\left|\sum_{|n| \leq \sqrt{x}} P_{2 q}\left(\sqrt{x-n^2}\right)\right| \lesssim x^{\frac{q-1}{2}+\frac{497}{1506}}+x^{\frac{2q-1}{4}}+x^{\frac{q+2}{4}} \log x \\
    &\qquad +\sum_{2 \leq k \leq x^{\frac{1}{4}}} \sum_{j=1}^{k-1} k^{-q}\left(1+\left(\frac{k}{j}+\frac{k}{k-j}\right)\right) x^{\frac{q-1}{2}}\left(k^{\frac{30}{41}} x^{\frac{497}{1506}}+\min \left\{x^{\frac{1}{2}}, k^{\frac{3}{2}} x^{\frac{1}{4}}\right\}\right) \\
    & \qquad \lesssim \begin{cases}x^{\frac{q-1}{2}+\frac{497}{1506}}, & q \geq 4, \\
    x^{\frac{4}{3}} \log x, & q=3.\end{cases}
    \end{aligned}
\end{equation*}
This, together with  \eqref{Eqt}, implies Theorem  \ref{mainT}.

\section{Funding}
Sheng-Chen Mao is   supported by the China Postdoctoral Science Foundation.  Sibei Yang is partially supported by the National Natural Science Foundation of China
(Grant No. 12431006), Longyuan Young Talents of Gansu Province, and the Key Project of Gansu Provincial National
Science Foundation (Grant No. 23JRRA1022).

\vspace{.5cm}
\begin{small}
\noindent{\bf Data Availability}\quad Not applicable. No datasets were generated or analysed during the current study.
\end{small}

\section*{Declarations}
\begin{small}
\noindent{\bf Conflict of interest}\quad The authors declare  that there is no conflict of interest.
\end{small}

\phantomsection\addcontentsline{toc}{section}{References}
\bibliographystyle{abbrv}
\bibliography{LPCP2606}

\vspace{2cm}
\noindent Sheng-Chen Mao ({\it Corresponding author}) and Sibei Yang    

\medskip\noindent
School of Mathematics and Statistics, Gansu Key Laboratory of Applied Mathematics and Complex Systems, Lanzhou University, Lanzhou 730000, The People’s Republic of China    

\medskip\noindent
\begin{tabular}{@{}ll@{}}
{\it E-Mails:}&{\ttfamily maoshengchen@lzu.edu.cn; maosci@163.com } (S.-C. Mao) \\
&{\ttfamily yangsb@lzu.edu.cn } (S. Yang)
\end{tabular}

\end{document}